\tikzset{every loop/.style={}}
\tikzset{
    labl/.style={anchor=south, rotate=90, inner sep=.5mm}
}
\newtheorem{thm}{Theorem}[section]
\newtheorem{lem}[thm]{Lemma}
\newtheorem{prop}[thm]{Proposition}
\numberwithin{equation}{section}
\newtheorem{letterthm}{Theorem}[section]
\theoremstyle{definition}
\newtheorem{defn}[thm]{Definition} % definition numbers are dependent on theorem numbers
\newtheorem{exmp}[thm]{Example} % same for example numbers
\newtheorem{remk}[thm]{Remark}
\newenvironment{claim}[1]{\par\noindent\underline{Claim:}\space#1}{}
\newenvironment{claimproof}[1]{\par\noindent\underline{Proof:}\space#1}{\leavevmode\unskip\penalty9999 \hbox{}\nobreak\hfill\quad\hbox{$\blacksquare$}}% for claims within a proof
\newcommand{\calM}{\mathcal{M}}
\newcommand{\calS}{\mathcal{S}}
\newcommand{\calG}{\mathcal{G}}
\newcommand{\calC}{\mathcal{C}}
\newenvironment{breakproof}[1][\proofname]{\par
  \pushQED{\qed}%
  \normalfont \topsep6\p@\@plus6\p@\relax
  \trivlist
  \item[\hskip\labelsep
        \itshape
    #1\@addpunct{.}]\ignorespaces\item
}{%
  \popQED\endtrivlist\@endpefalse
}
\newcommand{\N}{\mathbb{N}}
\newcommand{\wt}[1]{\widetilde{#1}}
\DeclareMathOperator{\Aut}{Aut}
\DeclareMathOperator{\Stab}{Stab}
\DeclareMathOperator{\star*}{star}
\DeclareMathOperator{\LCM}{LCM}
\newcommand{\acts}{\curvearrowright}
\begin{document}

	\begin{center}
{\LARGE\bf
Two Generalisations of Leighton's Theorem}\\
\bigskip
\bigskip
{\large Sam Shepherd\footnote{Shepherd was funded by the Engineering and Physical Sciences Research Council.}\\
with an appendix by Giles Gardam\footnote{Gardam was funded by the Deutsche Forschungsgemeinschaft (DFG, German Research Foundation) under Germany's Excellence Strategy EXC 2044 –390685587, Mathematics Münster: Dynamics–Geometry–Structure.} and Daniel J. Woodhouse\footnote{Woodhouse was supported by the Israel Science Foundation (grant 1026/15).}}

\end{center}
\bigskip
\begin{abstract}
	Leighton's graph covering theorem says that two finite graphs with a common cover have a common finite cover. We present a new proof of this using groupoids, and use this as a model to prove two generalisations of the theorem. The first generalisation, which we refer to as the symmetry-restricted version, restricts how balls of a given size in the universal cover can map down to the two finite graphs when factoring through the common finite cover - this answers a question of Neumann from \cite{Neumann}. Secondly, we consider covers of graphs of spaces (or of more general objects), which leads to an even more general version of Leighton's Theorem. We also compute upper bounds for the sizes of the finite covers obtained in Leighton's Theorem and its generalisations. An appendix by Gardam and Woodhouse provides an alternative proof of the symmetry-restricted version, that uses Haar measure instead of groupoids.
\end{abstract}
\bigskip
\tableofcontents
\bigskip
\section{Introduction}

\begin{letterthm}(Leighton's Theorem)\label{introLeighton}\\
	Let $G_1$ and $G_2$ be finite connected graphs with a common cover. Then they have a common finite cover. 
\end{letterthm}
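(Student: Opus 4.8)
The plan is to reduce the statement to a question about lattices acting on a tree, and then build the common finite cover essentially by hand, using a groupoid to juggle the two covering maps at once.

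\textbf{Reduction.} Replacing the given common cover by a connected component if necessary, $G_1$ and $G_2$ have a common connected cover, hence a common universal cover $T$, which is a locally finite tree. For $i=1,2$ the group $F_i := \pi_1(G_i)$ is free and acts freely and cocompactly on $T$ with $T/F_i \cong G_i$; write $p_i \colon T \to G_i$ for the quotient map. A common finite cover of $G_1$ and $G_2$ is precisely a finite graph of the form $T/H_1 \cong T/H_2$ where $H_i \le F_i$ has finite index, and $T/H_1 \cong T/H_2$ as graphs if and only if $H_1$ and $H_2$ are conjugate in $\Aut(T)$. So it suffices to prove: the uniform lattices $F_1, F_2 \le \Aut(T)$ admit finite-index subgroups that are conjugate inside $\Aut(T)$.

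\textbf{Why this needs an idea.} The obvious moves fail. There is no common base graph over which to form a fibre product of $G_1$ and $G_2$, and the subgroup $\langle F_1, F_2 \rangle \le \Aut(T)$ generated by the two lattices can be non-discrete, so one cannot simply quotient by it. Instead I would record on $T$ the combined data of both covering maps — the bi-labelling assigning to each vertex or edge of $T$ the pair of its images under $p_1$ and $p_2$ — and construct the common finite cover as a quotient of $T$ (equivalently, locate the subgroups $H_i$) via the fundamental groupoid of a finite gadget carrying this combined data. Morally the desired cover is ``$T$ modulo: same bi-label \emph{and} same local picture out to some radius $r$''; the groupoid language is what lets one manipulate the two actions on the single unit space $V(T)$ simultaneously without needing a common base.

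\textbf{The crux.} The heart of the proof is to choose $r$ and the amount of retained labelling data so that the resulting quotient is \emph{simultaneously} (i) a genuine covering map onto each of $G_1$ and $G_2$, which forces the retained data to be invariant under suitably large subgroups of $F_1$ and of $F_2$, and (ii) \emph{finite}, which is not automatic since $\Aut(T)$ is uncountable. Point (ii) is a pigeonhole/periodicity argument: pointed balls of a fixed large radius in the bi-labelled tree realise only finitely many isomorphism types, and a uniform lattice must reuse these types with matching multiplicities throughout $T$. Making this precise is delicate, chiefly because when $T$ is not (bi)regular — the generic case, as the vertices of $G_i$ need not all have equal valence — the group $\Aut(T)$ is non-unimodular, so the types cannot be controlled by a naive covolume count, and low-valence vertices need separate care. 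I expect this bookkeeping — consistently matching local types across all of $T$ to extract a finite common quotient — to be the main obstacle, and the place where the groupoid formalism does the work that Haar measure does in the appendix.
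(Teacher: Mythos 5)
Your reduction to finding finite-index subgroups of $F_1$ and $F_2$ that are conjugate in $\Aut(T)$ is correct, and you have correctly located the general strategy (encode both covering maps in finite local data and glue), but the proposal stops exactly where the proof has to begin. The object you describe --- ``$T$ modulo: same bi-label and same local picture out to some radius $r$'' --- is not a covering of $G_1$ and $G_2$ in general, for any $r$, and no choice of retained labelling data fixes this. The obstruction is not the finiteness issue you flag (finiteness of the set of local types is immediate from local finiteness of $T$ and finiteness of the $G_i$), nor non-unimodularity (the combinatorial proof uses no covolume count at all). It is that a quotient of $T$ contains at most one vertex per local type, whereas a common cover must in general contain each type with a specific \emph{multiplicity}: for the glued-up graph to be a covering, the number of copies of a vertex-type times the number of edges of a given edge-type in its star must equal the corresponding count for every compatible partner type, and with one copy of each type this system of gluing equations usually has no solution. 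The entire content of Leighton's theorem is the construction that solves it: one takes $N/|\Gamma(x,-)|$ copies of each type, where $\Gamma$ is the groupoid of star maps generated by $p_2|_{\mathrm{star}(x)}\circ(p_1|_{\mathrm{star}(x)})^{-1}$ acting on $E(G_1)\sqcup E(G_2)$ and $N$ is a common multiple of the relevant orbit and coset counts, and an orbit--stabiliser computation shows the two sides of each gluing equation agree, so that a perfect matching of edge-ends exists. None of this appears in your proposal; you explicitly defer it (``I expect this bookkeeping \dots to be the main obstacle'').

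In short, the one concrete candidate you offer (a quotient of $T$ by local type) is precisely the known false start that the multiplicities-plus-matching construction exists to repair, and the part you defer as ``bookkeeping'' is the theorem. To complete the argument you would need to (i) define the groupoid $\Gamma$ on $V(G_1)\sqcup V(G_2)$ and its action on $E(G_1)\sqcup E(G_2)$, (ii) check that this action is compatible with edge inversion, (iii) build the candidate cover by duplicating vertices and edges with the LCM multiplicities, and (iv) verify via orbit--stabiliser that the required matching between edge-ends and vertices exists. As it stands the proposal is a plan whose central step is both unexecuted and, in the specific form stated, incorrect.
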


This theorem was proven for $k$-regular graphs by Angluin and Gardener \cite{Angluin}, and the general case was proven by Leighton \cite{Leighton}. An alternative proof was given by Bass and Kulkarni \cite{BassKulkarni} using Bass--Serre theory. Walter Neumann revisited both proofs in the context of coloured graphs, and investigated generalisations to ``symmetry-restricted graphs'' \cite{Neumann}. In this paper we solve Leighton's Theorem for Neumann's notion of symmetry-restricted graph. 

There have been several applications of Leighton-type theorems in recent years. Levitt used the original Leighton's Theorem to solve the commensurability problem for certain generalised Baumslag--Solitar groups \cite{Levitt}. Behrstock--Neumann employed Neumann's symmetry-restricted version of Leighton's Theorem to prove quasi-isometric rigidity for certain non-geometric 3-manifold groups \cite{3mlfd}. Woodhouse proved a version of Leighton's Theorem for ``graphs with fins'', and used this to prove pattern rigidity for free groups with line patterns \cite{Woodhouse}; and the construction of the finite cover from that paper was recently used by the first author and Woodhouse to prove quasi-isometric rigidity for certain graphs of virtually free groups with two-ended edge groups \cite{ShepherdWoodhouse}. And Stark and Woodhouse used the symmetry-restricted Leighton's Theorem from this paper to prove action rigidity for free products of hyperbolic manifold groups \cite{StarkWoodhouse}.

We formulate symmetry-restriction by working in the universal cover. Indeed, if $T$ is a tree that covers finite graphs $G_1$ and $G_2$ via maps $p_i:T\to G_i$, then Leighton's Theorem gives us a common finite cover $G$ of $G_1$ and $G_2$, and by elementary covering space theory we can draw the following commutative diagram of covering maps, where $g$ is an automorphism of $T$.

\begin{equation*}
\begin{tikzcd}[
ar symbol/.style = {draw=none,"#1" description,sloped},
isomorphic/.style = {ar symbol={\cong}},
equals/.style = {ar symbol={=}},
subset/.style = {ar symbol={\subset}}
]
T\ar{dd}[swap]{p_1}\ar{rr}{g}\ar{dr}&&T\ar{dl}\ar{dd}{p_2}\\
&G\ar{dl}\ar{dr}\\
G_1&&G_2
\end{tikzcd}
\end{equation*}

If the coverings $p_i:T\to G_i$ have deck transformation groups $\Gamma_i$, then this diagram implies that the conjugate $\Gamma_1^g$ is commensurable to $\Gamma_2$ in $\Aut(T)$, because their intersection is the deck transformation group of the (right-hand) covering $T\to G$. Note that for $\Gamma\leqslant\Aut(T)$, $\Gamma$ being the deck transformation group of a covering $T\to G$ of a finite graph is equivalent to $\Gamma$ acting freely cocompactly on $T$, which is also equivalent to $\Gamma$ being a free uniform lattice in $\Aut(T)$ (with respect to the compact-open topology on $\Aut(T)$). So Leighton's Theorem is equivalent to saying: for $T$ a tree, and $\Gamma_1,\Gamma_2\leqslant\Aut(T)$ free uniform lattices, there exists $g\in\Aut(T)$ with $\Gamma_1^g$ commensurable to $\Gamma_2$. However we have no control over the conjugating element $g$, and this is exactly the issue addressed by symmetry-restriction. We need the following definition, which, as was later pointed out to us, is the same as \cite[Definition 3.1]{Banks}.

\begin{defn}\label{defn:SymClosure}
	Let $T$ be a tree, let $H\leqslant\Aut(T)$, and let $R$ be an integer. For a vertex $v\in V(T)$, we denote the $R$-ball centred at $v$ by $B_R(v)$. Given some $g\in\Aut(T)$ and a vertex $v\in V(T)$, we let $g_v:B_R(v)\to B_R(gv)$ be the restriction of $g$ to $B_R(v)$. We define the \emph{$R$-symmetry-restricted closure} of $H$ to be: 
	
	$$	\mathscr{S}_R(H) := \{ g \in \Aut(T)\mid \forall v \in V(T), \exists h \in H \text{ s.t. }h_v=g_v:B_R(v)\to B_R(gv)\}
	$$ 
\end{defn}

The Symmetry-restricted Leighton's Theorem can then be stated as follows. Neumann solved the special case where $R=1$ and $T/H$ is a tree \cite[Theorem 2.4]{Neumann}.

\begin{letterthm}(Symmetry-restricted Leighton's Theorem)\\\label{thm:SymLeighton} 
	Let $T$ be a tree, and $H\leqslant\Aut(T)$, and let $\Gamma_1, \Gamma_2 \leqslant H$ be free uniform lattices in $\Aut(T)$.
	Then for all $R \in \mathbb{N}$ there exists $g \in \mathscr{S}_R(H)$ such that $\Gamma_1^g$ is commensurable to $\Gamma_2$ in $\Aut(T)$. 
\end{letterthm}

This theorem was proven by the first author, and independently by Gardam and Woodhouse. The proof of Gardam and Woodhouse, which uses the Haar measure on subgroups of Aut$(T)$, is given in the appendix. The particular way we have stated the theorem here is also due to Gardam and Woodhouse. The proof of the first author is given in Section \ref{sec:SymVersion}, and deduces it from a more general version of Leighton's Theorem, concerning what we call graphs of objects, which we now go on to describe. A simple motivating example is the following:

\begin{exmp}\label{exmp:graphoftriangles}
	Define a \emph{graph of polygons} to be a space consisting of solid regular polygons with some edges joining vertices of the polygons. A \emph{covering} of graphs of polygons is a topological covering that restricts to isometries between polygons. Leighton's Theorem holds for graphs of polygons: if two finite graphs of polygons (ie. with finite underlying graphs) are covered by the same tree of polygons, then they have a common finite cover. This can be deduced from Theorem \ref{thm:SymLeighton}, but it also follows from a more general version of Leighton's Theorem as we describe below - both deductions are given in Proposition \ref{prop:polygonLeighton}.
	\bigskip
	\begin{figure}[H]
		\centering
	\begin{tikzpicture}[node distance = 4cm,
	scale=3,
	triangle/.style = {fill=red, draw=red, regular polygon, regular polygon sides=3, minimum size=40pt},
	square/.style = {fill=red, draw=red, regular polygon, regular polygon sides=4, minimum size=40pt},
	pentagon/.style = {fill=red, draw=red, regular polygon, regular polygon sides=5, minimum size=40pt},
	every loop/.style={min distance=2cm},
	]
	\clip (-1,-1.8) rectangle (3,0.3);
	
	\node[triangle] (1) {};
	\node[pentagon] (2) at (2,0) {};
	\node[square] (3) at (1,-1) {};
	
	\path
	(1.corner 3) edge (3.corner 2)
	(1.corner 2) edge [bend right]  (3.corner 2)
	(1.corner 1) edge (2.corner 1)
	(2.corner 3) edge [loop below] (2.corner 3)
	;
	\end{tikzpicture}
	\caption{A graph of polygons.}
	\end{figure}
\end{exmp}
\bigskip

Graphs of polygons can be thought of as graphs of spaces where the vertex spaces are polygons and the edge spaces are points. It is natural to ask if Leighton's Theorem also holds for more complicated graphs of spaces. One difficulty is that a general covering between graphs of spaces induces coverings between the vertex spaces rather than isometries. In fact one can exploit this difficulty to construct a pair of finite graphs of spaces with a common universal cover but no common finite cover; this can be done with Baumslag--Solitar groups or with a graph of graphs due to Wise, which are discussed further in Section \ref{sec:GraphofObjects}. The other difficulty that arises is when the isometry group of an edge space is infinite. A way of solving this for many examples is by working in a different category of spaces; graphs of polygons are defined in the category of metric spaces, and the edge spaces are points so have trivial isometry groups, but for other examples one might need to work in the category of simplicial complexes and consider automorphism groups of edge spaces rather than isometry groups. 

To deal with both problems we define a \emph{graph of objects} to be a graph of spaces with respect to a given category of spaces, and we define \emph{coverings of graphs of objects} to be coverings of graphs of spaces that restrict to isomorphisms between edge and vertex spaces. We also define a covering from a graph of objects $X$ to itself to be an \emph{automorphism} of $X$ if it induces an automorphism of the underlying graph, and we denote the group of automorphisms by $\Aut(X)$. If $X$ is a graph of objects and $X_e$ is an edge space, then we have a homomorphism from the stabiliser $\Aut(X)_e$ of the edge $e$ to $\Aut(X_e)$, the automorphism group of $X_e$ with respect to the given category, and we call the image the \emph{isotropy group of $e$ in $\Aut(X)$}. Precise definitions for all these notions are given in Section \ref{sec:GraphofObjects}. Our version of Leighton's Theorem for graphs of objects is then as follows.

\begin{letterthm}(Graph of Objects Leighton's Theorem)\\\label{thm:ObjectLeighton}
	Let $X^1$ and $X^2$ be finite graphs of objects covered by a tree of objects $X$. If $\Aut(X)$ has finite edge isotropy groups, then $X^1$ and $X^2$ have a common finite cover.
\end{letterthm}

We actually prove a stronger version of this theorem which incorporates a notion of symmetry-restriction for graphs of objects (Theorem \ref{thm:SymObjectLeighton}), and it is this stronger version that we use to deduce Theorem \ref{thm:SymLeighton}.

A key ingredient in the proof of the Graph of Objects Leighton's Theorem is the use of groupoids. In general, groupoids are very natural objects that arise in a wide range of mathematical contexts, and they are especially powerful in topology for stitching together local information in a basepoint free manner into a global structure. Some basic background on groupoids is given in Section \ref{sec:groupoids}. Our groupoid method also gives a new proof of the original Leighton's Theorem for graphs, which we explain in Section \ref{sec:Original}.

The groupoid proof of the original Leighton's Theorem is very instructive for understanding the proof of the graph of objects version. A sketch of the proof is as follows. Let $T$ be a tree that covers our finite graphs $G_1$ and $G_2$ via maps $p_1:T\to G_1$ and $p_2:T\to G_2$. One idea is to consider how each star (of a vertex) in $T$ maps down to $G_1$ and $G_2$, and observe that there are only finitely many stars up to $(p_1,p_2)$-invariant isomorphism. It is then natural to try and piece together these (isomorphism classes of) stars to build a common finite cover of $G_1$ and $G_2$. But two stars can only be joined along an edge if they are compatible on this edge, meaning that the maps to $G_1$ and $G_2$ agree on this edge. Unfortunately this approach doesn't quite work, even if we take duplicates of some stars, as there may not be any ``symmetry'' in the compatibility relations between stars. The solution  is to create symmetry by expanding our collection of stars, with the aid of a finite groupoid $\calS$ consisting of maps between stars in $G_1$ and $G_2$. For a vertex $v\in V(T)$, an automorphism $g\in\Aut(T)$ restricts to a map $\star*(v)\to\star*(gv)$, and this induces a map $\star*(p_i(v))\to\star*(p_j(v))$ for a choice of $i,j\in\{1,2\}$. Such maps form the groupoid $\calS$. To each $s\in\calS$ that maps from a star in $G_1$ to a star in $G_2$, we associate a new star (that is not yet part of a graph) with maps down to $G_1$ and $G_2$ that commute with $s$ - these will form our collection of stars that we can piece together to form the common finite cover of $G_1$ and $G_2$.
\bigskip

In Section \ref{sec:Bounds} we compute upper bounds for the sizes of the finite covers obtained in Theorems \ref{introLeighton} and \ref{thm:SymObjectLeighton}, and a bound for the index of commensurability in Theorem \ref{thm:SymLeighton}. These bounds come from analysing the constructions used in the proofs of these theorems, except for Theorem \ref{introLeighton} it turns out we get a better bound if we use Leighton's original proof. There is no reason to believe these bounds are best possible. The bounded version of Theorem \ref{introLeighton} is the following.

\begin{letterthm}(Bounded Leighton's Theorem)\label{introboundL}\\
	Let $G_1$ and $G_2$ be finite connected graphs. Set $E:=\tfrac{1}{2}|E(G_1)|$ and $V:=|V(G_2)|$. Then $G_1$ and $G_2$ have a common cover $G$ such that $|V(G)|\leq 2V\exp(2\sqrt{E\log{E}})$.
\end{letterthm}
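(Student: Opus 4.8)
The plan is to bypass the groupoid construction of Sections 3--4 and instead extract a quantitative bound from Leighton's original argument, which produces a more economical cover. Two preliminary remarks. First, the case $|E(G)|=0$ is trivial --- then $G$, $G'$ and $H$ are each a single vertex --- so I would assume $|E(G)|\ge 1$, and write $m=|E(G)|$ for the number of directed edges, so that $E=\tfrac{1}{2}m$. Second, the conclusion only has content when a common cover exists, so I would assume (as one must) that $G$ and $G'$ share a universal cover $T$; equivalently, by Theorem \ref{introLeighton}, they have a common finite cover, and it is the size of such a cover that we are bounding.

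Next I would revisit Leighton's construction and arrange the common cover $H$ as an explicit covering of $G'$: take $V(H)=V(G')\times F$ for a finite fibre $F$, with the edges defined so that both the projection $H\to G'$ and a second covering map $H\to G$ exist. In Leighton's scheme the way $F$ is permuted as one traverses the edges of $G'$ is governed by the combinatorics of $G$ alone, and the requirement that all the edge identifications close up consistently forces $|F|$ to be a multiple of the order of a permutation $\sigma$ of the $m$-element set of directed edges of $G$, where $\sigma$ commutes with the edge-reversal involution. Since a permutation of an $m$-element set commuting with a fixed-point-free involution has order at most $2g(m/2)=2g(E)$ --- here $g$ denotes Landau's function, the largest order of a permutation of a set of the given size --- one may take $|F|\le 2g(E)$, whence $|V(H)|=V'\cdot|F|\le 2V'g(E)$.

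The remaining step is purely analytic: the classical estimate $\log g(E)\le 2\sqrt{E\log E}$, which holds for every $E\ge 1$, gives $g(E)\le\exp(2\sqrt{E\log E})$ and hence the claimed bound. I expect the middle step --- not this last one --- to be the main obstacle, since Leighton's theorem is customarily phrased as a bare existence statement: the work lies in recasting it as an explicit fibre construction over $G'$ in which the number of sheets is transparently controlled by the order of a single permutation of the $m$ directed edges of $G$ (commuting with edge-reversal), rather than of some larger auxiliary set --- the latter would weaken the estimate to $\exp(2\sqrt{m\log m})=\exp(2\sqrt{2E\log(2E)})$ or worse. With the construction pinned down in this form the Landau bound is off the shelf and the remaining calculations are routine; and, as remarked in the introduction, there is no reason to believe that $\exp(2\sqrt{E\log E})$ is best possible.
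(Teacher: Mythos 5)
Your proposal is correct and follows essentially the same route as the paper: both read the bound off from Leighton's/Neumann's original construction, obtaining $|V(H)|=V'\cdot s$ with $s\leq 2g(E)$ and then applying the explicit sub-exponential estimate for Landau's function. Your packaging of the combinatorial step (the order of a permutation of the $2E$ directed edges commuting with the fixed-point-free edge-reversal involution) is equivalent to the paper's (the least common multiple of the edge-colour class sizes $m_k$, which satisfy $m_k=m_{\overline{k}}$ and are even when $k=\overline{k}$), and like the paper you defer the detailed verification of the fibre structure to the original construction.
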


\textbf{Acknowledgements:}\,
I would like to thank my supervisor, Martin Bridson, for his advice while I was developing the ideas for this paper, and for his thorough proofreading of the final drafts. I would like to thank Ric Wade for helpful comments, which in particular spurred me to produce Theorem \ref{thm:SymObjectLeighton}. I would also like to thank the referee for their useful suggestions.

\bigskip
\section{Graphs and groupoids}\label{sec:groupoids}

In this section we define graphs, stars and covers, which are used throughout the paper, and we give some background about groupoids that will be needed in our proofs of Theorems \ref{introLeighton} and \ref{thm:ObjectLeighton}.

\begin{defn}(Graphs)\\
	A \textit{graph} $G$ is defined by the following data:
	\begin{itemize}
		\item A vertex set $V(G)$.
		\item An edge set $E(G)$.
		\item Maps $\partial_0,\partial_1:E(G)\to V(G)$ to denote the initial and terminal vertex of each edge.
		\item An involution $E(G)\to E(G)$, $e\mapsto\overline{e}$, which denotes the inversion of an edge, such that $e$ and $\overline{e}$ are always distinct, and such that $\partial_0e=\partial_1\overline{e}$ and $\partial_0\overline{e}=\partial_1e$ for any $e\in E(G)$.
	\end{itemize}
	Note that $\partial_1$ is redundant if $\partial_0$ and edge inversion have already been defined. For $A\subset E(G)$ we will use the notation $\overline{A}:=\{\overline{e}\mid e\in A\}$.\par
	A \textit{graph morphism} $\alpha:G_1\to G_2$ is given by maps $\alpha:V(G_1)\to V(G_2)$ and $\alpha:E(G_1)\to E(G_2)$ that preserve the graph structure given by $\partial_0, \partial_1$ and edge inversion. Note that it is enough to check that $\partial_0$ and edge inversion are preserved. A graph morphism $G\to G$ that is bijective on edge and vertex sets is called an \textit{automorphism}, and the group of automorphisms of a graph $G$ is denoted Aut$(G)$.
\end{defn}

\begin{defn}(Stars and covers)\\
	Let $G$ be a graph. For $v\in V(G)$ define the \textit{star} of $v$ by
	\begin{equation*}
	\text{star}(v)=\{e\in E(G)\mid \partial_0 e=v\}.
	\end{equation*}
	A graph morphism $\alpha:G_1\to G_2$ is a \textit{covering} if it is surjective and the induced maps star$(v)\to$ star$(\alpha(v))$ are bijections. In this case we say that $G_1$ is a \textit{cover} of $G_2$.
\end{defn}
\bigskip

\begin{defn}\label{groupoids}(Groupoid)\\
	A \textit{groupoid} $\calG$ is a small category in which all morphisms are invertible. We will use Ob$(\calG)$ to denote the set of objects, and when referring to a morphism $g$ we will simply write $g\in\calG$. For $g\in \calG$, we will denote the initial and terminal objects by $i(g)$ and $t(g)$. For $x\in$ Ob$(\calG)$ we write $1_x$ for the identity morphism of $x$.\par 
	For $x,y\in$ Ob$(\calG)$ it will helpful to have the following additional notation.
	\begin{align*}
	\calG(x,y)&:=\{g\in\calG\mid i(g)=x,\,t(g)=y\}\\
	\calG(x,-)&:=\{g\in\calG\mid i(g)=x\}\\
	\calG(-,y)&:=\{g\in\calG\mid t(g)=y\}
	\end{align*}
	A \textit{subgroupoid} of $\calG$ is a subcategory in which all morphisms are invertible.
\end{defn}

When piecing together the finite cover in our proof of Leighton's Theorem, we will make use of groupoid actions. These are a direct analogue to group actions, and they also give rise to notions of orbit and stabiliser. The definition of groupoid action given below is from \cite[III.$\mathcal{G}$.2.8(3)]{nonpos}.

\begin{defn}(Groupoid action)\label{action}\\
	An \textit{action} of a groupoid $\calG$ on a set $A$ consists of a map $\varepsilon:A\to$ Ob$(\calG)$ and a map
		\begin{align*}
\{(g,a)\in\calG\times A\mid i(g)=\varepsilon(a)\}&\to A\\
(g,a)&\mapsto g\cdot a,
		\end{align*} 
		such that 
		\begin{enumerate}[(a)]
			\item $\varepsilon(g\cdot a)=t(g)$,
			\item $(g'g)\cdot a=g'\cdot(g\cdot a)$,
			\item $1_{\varepsilon(a)}\cdot a=a$,
	\end{enumerate}
for any $a\in A$ and $g,g'\in \calG$ satisfying $i(g)=\varepsilon(a)$ and $i(g')=t(g)$.\par 
\end{defn}

\begin{defn}(Orbits and stabilisers of groupoid actions)\\
	If a groupoid $\calG$ acts on a set $A$ and $\mathcal{H}\subset\calG$, define the \textit{$\mathcal{H}$-orbit of $a\in A$} by
	\begin{equation*}
	\mathcal{H}\cdot a:=\{h\cdot a\mid h\in\mathcal{H},\,i(h)=\varepsilon(a)\}.
	\end{equation*}
	Similarly, for $B\subset A$ write $\mathcal{H}\cdot B:=\cup_{b\in B}\mathcal{H}\cdot b$. Define the \textit{stabiliser of $a\in A$} by
	\begin{equation*}
	\text{Stab}_\calG(a):=\{g\in\calG\mid i(g)=\varepsilon(a),\,g\cdot a=a\}.
	\end{equation*}
\end{defn}

\bigskip
When building the finite cover in Leighton's Theorem we must find appropriate matchings between the pieces we wish to stitch together. The following lemma will help us achieve this.

\begin{lem}(Groupoid Orbit-Stabiliser Theorem)\label{orbstab}\\
	Let $\calG$ be a groupoid acting on a set $A$, and fix $a\in A$ with $\varepsilon(a)=x$. Then the fibres of the map
	\begin{align*}
	\phi_a:\calG(x,-)&\to\calG\cdot a\\
	g&\mapsto g\cdot a
	\end{align*}
	are cosets $g\Stab_\calG(a):=\{gg'\,|\,g'\in\Stab_\calG(a)\}$ for $g\in\calG(x,-)$. If $\calG(x,-)$ is finite, we deduce that
	\begin{equation*}
	|\calG(x,-)|=|\Stab_\calG(a)||\calG\cdot a|.
	\end{equation*}	
	\end{lem}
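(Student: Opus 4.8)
The plan is to run the classical orbit--stabiliser argument, taking care throughout to check that composites and the action are defined whenever we use them. First I would record the two preliminary facts. The map $\phi_a$ is well defined, since for $\gamma\in\Gamma(x,-)$ we have $i(\gamma)=x=\varepsilon(a)$ so that $\gamma\cdot a$ makes sense; and $\phi_a$ is surjective onto $\Gamma\cdot a$ essentially by the definition of the orbit. Secondly I would check that $\Stab_\Gamma(a)$ is a subgroup of the vertex group $\Gamma(x,x)$: if $\gamma\cdot a=a$ then axiom (a) gives $t(\gamma)=\varepsilon(\gamma\cdot a)=\varepsilon(a)=x$, and since also $i(\gamma)=x$ we get $\gamma\in\Gamma(x,x)$; closure under composition and inversion follows from axioms (b) and (c). In particular, for $\gamma\in\Gamma(x,-)$ and $\gamma'\in\Stab_\Gamma(a)$ the composite $\gamma\gamma'$ is defined (as $t(\gamma')=x=i(\gamma)$), so the cosets $\gamma\,\Stab_\Gamma(a)\subseteq\Gamma(x,-)$ appearing in the statement make sense.

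The heart of the proof is the identification of the fibres. Fix $\gamma_1,\gamma_2\in\Gamma(x,-)$ with $\phi_a(\gamma_1)=\phi_a(\gamma_2)$, i.e. $\gamma_1\cdot a=\gamma_2\cdot a$. Applying axiom (a) to both sides gives $t(\gamma_1)=t(\gamma_2)$, so the composite $\gamma_1^{-1}\gamma_2$ is defined and lies in $\Gamma(x,x)$. Using axioms (b) and (c) one computes $(\gamma_1^{-1}\gamma_2)\cdot a=\gamma_1^{-1}\cdot(\gamma_2\cdot a)=\gamma_1^{-1}\cdot(\gamma_1\cdot a)=(\gamma_1^{-1}\gamma_1)\cdot a=1_x\cdot a=a$, so $\gamma_1^{-1}\gamma_2\in\Stab_\Gamma(a)$ and hence $\gamma_2\in\gamma_1\,\Stab_\Gamma(a)$. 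Conversely, if $\gamma_2=\gamma_1\gamma'$ with $\gamma'\in\Stab_\Gamma(a)$, then $\gamma_2\cdot a=\gamma_1\cdot(\gamma'\cdot a)=\gamma_1\cdot a$. Thus the fibre of $\phi_a$ containing $\gamma_1$ is precisely the coset $\gamma_1\,\Stab_\Gamma(a)$.

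Finally I would assemble the count. The fibres of $\phi_a$ partition $\Gamma(x,-)$, and by the previous paragraph they are exactly the left cosets of $\Stab_\Gamma(a)$ in $\Gamma(x,-)$; left multiplication by the invertible morphism $\gamma$ is a bijection $\Stab_\Gamma(a)\to\gamma\,\Stab_\Gamma(a)$, so every fibre has cardinality $|\Stab_\Gamma(a)|$ (this is finite when $\Gamma(x,-)$ is, being a subset of it). Since $\phi_a$ is surjective, the number of fibres is $|\Gamma\cdot a|$, and when $\Gamma(x,-)$ is finite we conclude $|\Gamma(x,-)|=|\Stab_\Gamma(a)|\,|\Gamma\cdot a|$. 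I do not expect a genuine obstacle here: the argument is pure bookkeeping of composability and of the domain of the action. The one point that needs a moment's attention, and which has no counterpart in the group case, is verifying $t(\gamma_1)=t(\gamma_2)$ before forming $\gamma_1^{-1}\gamma_2$; this is what makes the groupoid version go through cleanly.
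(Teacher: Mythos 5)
Your proof is correct and follows the same route as the paper's one-line argument (which only records the key step: $\gamma\cdot a=\delta\cdot a$ implies $\gamma^{-1}\delta\in\mathrm{Stab}_\Gamma(a)$, hence $\delta\in\gamma\,\mathrm{Stab}_\Gamma(a)$). You have simply filled in the routine composability checks and the counting that the paper leaves implicit.
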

\begin{proof}
	If $g,h\in\calG(x,-)$ satisfy $g\cdot a=h\cdot a$, then $g^{-1}h\in$ Stab$_\calG(a)$, so $h\in g\text{Stab}_\calG(a)$.
\end{proof}

\bigskip
\section{Original Leighton's Theorem}\label{sec:Original}

We now present our new groupoid proof of Leighton's Theorem.

\begin{thm}(Leighton's Theorem)\label{Leighton}\\
Let $G_1$ and $G_2$ be finite connected graphs with a common cover. Then they have a common finite cover.	
\end{thm}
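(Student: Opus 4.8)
The plan is to carry out the strategy sketched in the introduction. First I would reduce to the case of a single tree: since $G_1$ and $G_2$ have a common cover they have the same universal cover, so there is a tree $T$ with coverings $p_1\colon T\to G_1$ and $p_2\colon T\to G_2$ whose deck groups $\pi_1G_1,\pi_1G_2$ act on $T$. For each vertex $\tilde v$ of $T$ the coverings restrict to bijections $\mathrm{star}(\tilde v)\to\mathrm{star}(p_l\tilde v)$, and composing the inverse of one with the other yields a bijection $\sigma_{\tilde v}\colon\mathrm{star}(p_1\tilde v)\to\mathrm{star}(p_2\tilde v)$ from a star of $G_1$ to a star of $G_2$. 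I would then let $\Gamma$ be the groupoid whose objects are the stars of $G_1$ and of $G_2$ and which is generated by the $\sigma_{\tilde v}$ inside the groupoid of all bijections between these stars; since $G_1,G_2$ are finite there are finitely many stars and finitely many bijections between any two, so $\Gamma$ is finite. I would also record the structural features of $\Gamma$ needed later: its morphisms only ever go between a $G_1$-star and a $G_2$-star or vice versa (so compositions alternate), and, using connectedness of $G_1,G_2$ and surjectivity of $p_1,p_2$, the number $|\Gamma(x,-)|$ of morphisms of $\Gamma$ out of an object $x$ is the same for every object — a uniformity I will lean on heavily.

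Next I would construct the common finite cover $G$ out of ``abstract stars''. To every morphism $\gamma$ of $\Gamma$ with $i(\gamma)=\mathrm{star}(v)$ for some $v\in V(G_1)$ and $t(\gamma)=\mathrm{star}(w)$ for some $w\in V(G_2)$ — call these the \emph{mixed} morphisms — I associate an abstract star $S_\gamma$: a set of darts identified with $\mathrm{star}(v)$, carrying the tautological map $\mu_1\colon S_\gamma\to\mathrm{star}(v)\subseteq E(G_1)$ and the map $\mu_2:=\gamma\colon S_\gamma\to\mathrm{star}(w)\subseteq E(G_2)$. I set $V(G)$ to be the set of mixed morphisms, with $S_\gamma$ playing the role of the star of the vertex $\gamma$; then $\mu_1$ and $\mu_2$ are local bijections on stars, and surjectivity onto $V(G_1)$ and $V(G_2)$ follows because every star of $G_1$ (resp.\ $G_2$) is $i(\sigma_{\tilde v})$ (resp.\ $t(\sigma_{\tilde v})$) for a suitable $\tilde v$. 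What remains is to supply the edges of $G$, i.e.\ to pair up all the darts of all the $S_\gamma$ into a fixed-point-free involution so that two paired darts have $\mu_1$-labels $e,\overline e$ and $\mu_2$-labels $f,\overline f$; given such a pairing, $\mu_1$ and $\mu_2$ descend to graph coverings $G\to G_1$ and $G\to G_2$ (local bijectivity is built in, and compatibility with edge-inversion is exactly the labelling condition), and replacing $G$ by a connected component — still onto, hence still a cover — completes the proof.

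The heart of the argument, and the step I expect to be the main obstacle, is producing this pairing; this is where I would invoke the Groupoid Orbit–Stabiliser Lemma. Grouping the darts by their $(\mu_1,\mu_2)$-labels, the number of darts with labels $(e,f)$ (where necessarily $e\in E(G_1)$, $f\in E(G_2)$) is $|\{\gamma\in\Gamma(\mathrm{star}(\partial_0 e),-):\gamma(e)=f\}|$; letting $\Gamma$ act on $E(G_1)\sqcup E(G_2)$ by $\gamma\cdot e:=\gamma(e)$, the lemma identifies this count as $0$ when $f\notin\Gamma\cdot e$ and as $|\Stab_\Gamma(e)|=|\Gamma(\mathrm{star}(\partial_0 e),-)|/|\Gamma\cdot e|$ otherwise. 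Thus a valid pairing exists provided this number agrees with the corresponding one for $(\overline e,\overline f)$ for every edge $e$ and every $f$, which I would reduce to two facts: that $|\Gamma\cdot e|=|\Gamma\cdot\overline e|$, because edge-inversion on $E(G_1)\sqcup E(G_2)$ is equivariant for the $\Gamma$-action — checked on the generators $\sigma_{\tilde v}$, using that inversion of an edge of $T$ at $\tilde v$ projects to inversion in $G_1$ and $G_2$ and is realised by $\sigma_{\tilde v'}$ at the other endpoint $\tilde v'$ — and that $|\Gamma(\mathrm{star}(\partial_0 e),-)|=|\Gamma(\mathrm{star}(\partial_1 e),-)|$, which is precisely the uniformity of $|\Gamma(x,-)|$ over objects. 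Conceptually this is the ``creation of symmetry'' promised in the introduction: the finitely many stars actually seen inside $T$ need not match up symmetrically under inversion, but once enlarged to the $\Gamma$-orbit-closed family $\{S_\gamma\}$ the matching counts become symmetric under $e\leftrightarrow\overline e$, and a perfect pairing — hence the graph $G$ — exists.
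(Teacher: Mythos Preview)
Your overall architecture matches the paper's---same groupoid $\Gamma$, same action on $E(G_1)\sqcup E(G_2)$, same orbit--stabiliser count---but the proof contains a genuine gap: the ``uniformity'' $|\Gamma(x,-)|=\text{const}$ is false in general, and your dart-pairing argument collapses without it. Concretely, take $T$ to be the tree whose degree-$5$ vertices form a bi-infinite line, each carrying three pendant leaves labelled $1,2,3$. Let $G_1$ be the obvious one--$A$--vertex quotient ($a$ with a loop and edges to leaves $b_1,b_2,b_3$), let $\phi\in\Aut(T)$ swap the leaves labelled $1,2$ at a single line-vertex and fix everything else, and let $p_2=p_1\circ\phi$ (so $G_2\cong G_1$). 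One computes $H=\langle\pi_1G_1,\pi_1G_2\rangle$ and finds that the $\Gamma$-component of $b_3$ is $\{b_3,b_3'\}$ while the component of $a$ is $\{a,a'\}$ with $|\Gamma(a,a)|=2$; hence $|\Gamma(a,-)|=4$ but $|\Gamma(b_3,-)|=2$. For the edge $e\colon a\to b_3$ this gives $|\Stab_\Gamma(e)|=2$ but $|\Stab_\Gamma(\overline e)|=1$, so there are twice as many $(e,f)$-darts as $(\overline e,\overline f)$-darts and no involution exists. (Connectedness of $G_1,G_2$ and surjectivity of $p_1,p_2$ do not help: adjacent vertices of $G_1$ can lie in different $\Gamma$-components with different vertex groups.)

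The paper repairs exactly this by taking $N/|\Gamma(x,-)|$ copies of each mixed morphism $\gamma$ as vertices, and $N/|\Gamma\cdot e|$ copies of each pair $(e,f)$ as edges, where $N$ is a common multiple of all the $|\Gamma(x,-)|$; the orbit--stabiliser identity then makes the two counts agree \emph{after} this rescaling. Your proof becomes correct once you introduce these multiplicities. Two smaller points: your assertion that morphisms of $\Gamma$ ``only ever go between a $G_1$-star and a $G_2$-star'' is wrong (compositions such as $\sigma_{\tilde v'}^{-1}\sigma_{\tilde v}$ live in $\Gamma(x,x')$ with $x,x'\in V(G_1)$), though you correctly restrict to mixed morphisms later; and what you call ``equivariance'' of edge-inversion is really the weaker statement $\overline{\Gamma\cdot e}=\Gamma\cdot\overline e$ (the paper's Claim), which is all that is needed.
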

\begin{proof}
	We will define a finite groupoid $\calS$, consisting of maps between stars in $G_1$ and $G_2$, and we'll use this to label the vertices of a finite graph $G$. We'll then consider an action of $\calS$ on the edges of $G_1$ and $G_2$, and use this to connect up the vertices of $G$ with edges in a way that makes it a cover of $G_1$ and $G_2$.\par 
	We divide the proof into four steps. We define $\calS$ in the first step, then set up the action of $\calS$ in the second step. In the third step we build the graph $G$, using the action to connect up the vertices, and finally we construct the covering maps to $G_1$ and $G_2$ in the fourth step.
	\begin{enumerate}[Step 1:]
		
\item	We will have Ob$(\calS)= V(G_1)\sqcup V(G_2)$, and each $s\in\calS$ will be a bijection
	\begin{equation}
s:\text{star}(i(s))\to\text{star}(t(s)).
	\end{equation}
	 Composition of groupoid elements is just composition of bijections. The set of all such bijections forms a groupoid. Let $p_1:T\to G_1$ and $p_2:T\to G_2$ be coverings of $G_1$ and $G_2$ by a tree $T$. We define $\calS$ to be the subgroupoid consisting of bijections
	\begin{equation}\label{starmap}
	s:=p_i\circ g\circ(p_j|_{\text{star}(x)})^{-1}:\text{star}(p_j(x))\to\text{star}(p_ig(x)),
	\end{equation}
	for $i,j\in\{1,2\}$, $x\in V(T)$ and $g\in\Aut(T)$.
	Think of $s$ as a map that lifts a star in $G_1$ to a star in $T$, maps it across to another star in $T$, and then projects it down to a star in $G_2$. $\calS$ is closed under composition because if $x,y\in V(T)$ with $p_j(x)=p_j(y)$ then 
	$$(p_j|_{\text{star}(x)})^{-1}\circ p_j|_{\star*(y)}=g|_{\star*(y)},$$
	for some $g\in\Aut(T)$ a deck transformation of $p_j:T\to G_j$. $\calS$ also contains inverses because $\Aut(T)$ contains inverses.
	
\item	There is a natural action of $\calS$ on $E(G_1)\sqcup E(G_2)$ defined by $s\cdot e=s(e)$ for $\partial_0e=i(s)$ - the associated map $\varepsilon:E(G_1)\sqcup E(G_2)\to V(G_1)\sqcup V(G_2)$ is given by $\varepsilon(e):=\partial_0e$.  The following claim will be vital in the next step of the proof.\\
	\begin{claim}
		$\calS\cdot\overline{e}=\overline{\calS\cdot e}$
	\end{claim}\\
\begin{claimproof}
	It suffices to show the inclusion $\overline{\calS\cdot e}\subset\calS\cdot\overline{e}$, because replacing $e$ by $\overline{e}$ gives $\overline{\calS\cdot\overline{e}}\subset\calS\cdot e$, which implies $\calS\cdot\overline{e}\subset\overline{\calS\cdot e}$.
	So let $s$ be as in (\ref{starmap}) and take $\partial_0 e=i(s)=p_j(x)$. We will show there is $s'\in\calS$ with $s'\cdot\overline{e}=\overline{s\cdot e}$. Let $\hat{e}\in\star*(x)$ with $p_j(\hat{e})=e$, and consider $y:=\partial_1\hat{e}$. Put $f:=p_ig(\hat{e})=s\cdot e$, and define $s'\in\calS$ by
	$$s'=p_i\circ g\circ(p_j|{\star*(y)})^{-1}.$$
		 We have $i(s')=p_j(y)=\partial_1e$, so $s'\cdot \overline{e}$ is defined; and $s'\cdot \overline{e}=p_ig(\overline{\hat{e}})=\overline{f}=\overline{s\cdot e}$.
		 \end{claimproof} 
 \begin{figure}[H]
 	\definecolor{RED}{HTML}{FF0000}
 	\definecolor{BLUE}{HTML}{0000FF}
 	\centering
 	\begin{overpic}[width=.7\textwidth,clip=true]{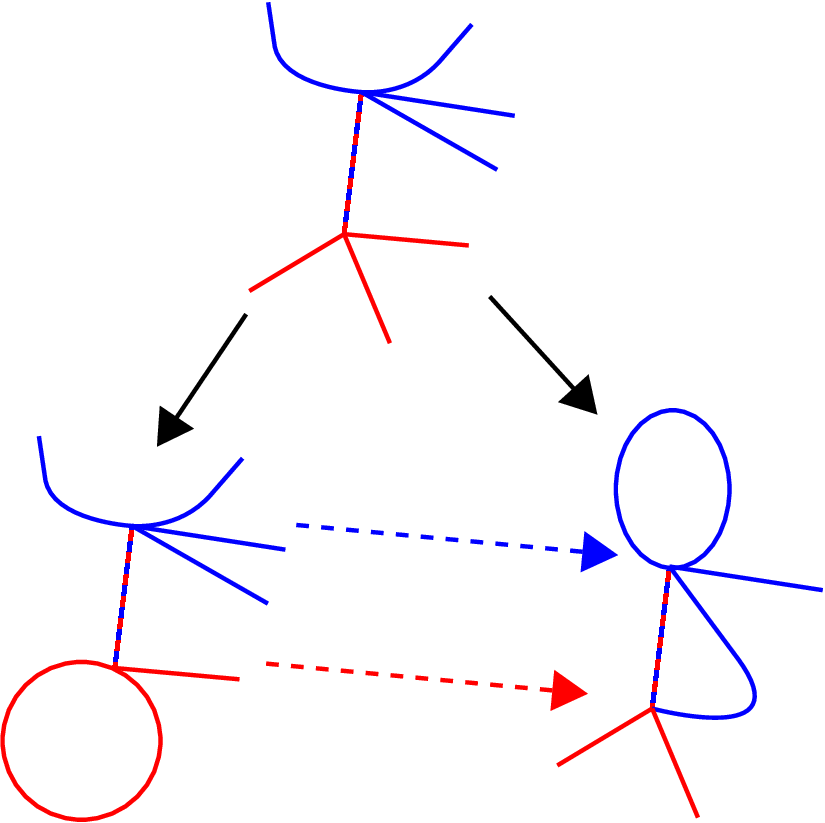}
 		\put(42,92){\Huge\textcolor{BLUE}{$y$}}
 		\put(50,38){\Huge\textcolor{BLUE}{$s'$}}
 		\put(38,62){\Huge\textcolor{RED}{$x$}}
 		\put(50,12){\Huge\textcolor{RED}{$s$}}
 		\put(10,90){\Huge$T$}
 		\put(-8,40){\Huge$G_j$}
 		\put(96,40){\Huge$G_i$}
 		\put(37,78){\Huge$\hat{e}$}
 		\put(10,26){\Huge$e$}
 		\put(74,20){\Huge$f$}
 		\put(18,60){\Huge$p_j$}
 		\put(68,60){\Huge$p_ig$}	
 	\end{overpic}
 	\caption{Diagram of $s$ and $s'$.}\label{fig:sands'}
 \end{figure}

	\item We are now ready to define a finite graph $G$ that covers $G_1$ and $G_2$. 
	Each vertex of $G$ will be labelled by a morphism in $\calS$, and we want vertices that map to a vertex $x\in V(G_1)$ to be labelled by a morphism in $\calS(x,-)$. If the covering $G\to G_1$ has degree $N$ then there must be $N$ vertices in $G$ sitting over each vertex of $G_1$. 
	The sets $\calS(x,-)$ could have different sizes for different $x$, so in general we need each morphism in $\calS$ to label multiple vertices in $G$.
	This motivates the following definition for the vertex set of $G$:
	\begin{equation*}
	V(G):=\left\{(s,l)\mid s\in\calS(x,y),\,x\in V(G_1),\,y\in V(G_2),\,1\leq l\leq\frac{N}{|\calS(x,-)|}\right\},
	\end{equation*}
	In a similar manner we define the edge set by
		\begin{equation*}
	E(G):=\left\{(e,f,k)\mid f\in\calS\cdot e,\,e\in E(G_1),\,f\in E(G_2),\,1\leq k\leq\frac{N}{|\calS\cdot e|}\right\}.
	\end{equation*}
	In the above two equations we take $N$ to be a fixed positive integer that is a common multiple of all the integers $|\calS(x,-)|$ and $|\calS\cdot e|$.
	We define edge inversion in $G$ by $\overline{(e,f,k)}:=(\overline{e},\overline{f},k)$, and this is well-defined by step 2.\par 
	\bigskip
	 To define $G$, it remains to define the map $\partial_0:E(G)\to V(G)$.	Fix $e\in E(G_1)$ and $f\in E(G_2)$ such that $f\in\calS\cdot e$, and say $\partial_0e=x$. For reasons that will become clear in the next step, each edge $(e,f,k)\in E(G)$ must satisfy
	\begin{align}\label{Gedgemap}
	\partial_0(e,f,k)&=(s,l),&\text{for some $s\in\calS(x,-)$ such that $s\cdot e=f$.}
	\end{align}
	We define such a map $\tau$ by choosing an arbitrary matching between such vertices $(s,l)$ and integers $1\leq k\leq N/|\calS\cdot e|$ - to verify that this is valid, we must check that we have equal numbers of each. Indeed, Lemma \ref{orbstab} tells us that $$|\{s\in\calS(x,-)\mid\calS\cdot e=f\}|=|\text{Stab}_\calS(e)|,$$
	 and so
	\begin{align*}
\left|\left\{(s,l)\mid s\in\calS(x,-),\,s\cdot e=f,\,1\leq l\leq\frac{N}{|\calS(x,-)|}\right\}\right|&=|\text{Stab}_\calS(e)|\frac{N}{|\calS(x,-)|}\\
&=\frac{N}{|\calS\cdot e|}
	\end{align*}
again by Lemma \ref{orbstab}.
	\bigskip
	
	\item In this last step we define the covering maps from $G$ down to $G_1$ and $G_2$.
	\begin{equation*}
	\begin{tikzcd}[
	ar symbol/.style = {draw=none,"#1" description,sloped},
	isomorphic/.style = {ar symbol={\cong}},
	equals/.style = {ar symbol={=}},
	subset/.style = {ar symbol={\subset}}
	]
	&G\ar{dl}[swap]{\mu_1}\ar{dr}{\mu_2}\\
	G_1&&G_2
	\end{tikzcd}
	\end{equation*}
	
	These are defined on the edge and vertex sets by
	\begin{align*}
	\mu_1(s,l):=i(s),&&\mu_1(e,f,k)&:=e,\\
	\mu_2(s,l):=t(s),&&\mu_2(e,f,k)&:=f.
	\end{align*}
	
	These maps clearly preserve edge inversion, and (\ref{Gedgemap}) ensures that they are well-defined graph morphisms, because then $\partial_0(e,f,k)=(s,l)$ with $i(s)=x$ implies
	\begin{align*}
	\partial_0\mu_1(e,f,k)&=\partial_0e&\partial_0\mu_2(e,f,k)&=\partial_0f\\
	&=x&&=\partial_0(s\cdot e)\\
	&=i(s)&&=t(s)\\
	&=\mu_1(s,l)&&=\mu_2(s,l)\\
	&=\mu_1\partial_0(e,f,k),&&=\mu_2\partial_0(e,f,k).
	\end{align*}
	By construction, the star of a vertex $(s,l)$ in $G$, with $s\in\calS(x,y)$, takes the form
	\begin{equation*}
	\text{star}(s,l)=\{(e,s\cdot e,k_e)\mid e\in\text{star}(x)\}
	\end{equation*}
	where each $k_e$ is some integer associated to $e$. Now $\mu_1(e,s\cdot e,k_e)=e$ and $\mu_2(e,s\cdot e,k_e)=s\cdot e$, so $\mu_1$ and $\mu_2$ induce bijections from $\text{star}(s,l)$ to $\text{star}(x)$ and $\text{star}(y)$ respectively. We conclude that $\mu_1$ and $\mu_2$ are coverings, which completes the proof of the theorem.
	\end{enumerate}
\end{proof}

\begin{remk}
	The finite cover $G$ constructed in the proof above may not be connected, but of course we can obtain a connected cover by choosing a component of $G$.
\end{remk}
\begin{remk}
	If we work with coloured graphs, meaning that each vertex and edge is assigned a colour, and we require graph morphisms to preserve colours, then Leighton's Theorem still holds with essentially the same proof. This is because we can define colours on the vertices and edges of $G$ by pulling back the colours from $G_1$, specifically $(s,l)$ will get the same colour as $i(s)$ and $(e,f,k)$ will get the same colour as $e$. $\Aut(T)$ will now be the group of colour preserving automorphisms and so each $s\in\calS$ will preserve colours, this implies that $i(s)$ has the same colour as $t(s)$ and each $(e,f,k)\in E(G)$ will have the same colour as $f$, therefore the covering $G\to G_2$ will also preserve colours.
\end{remk}
\begin{remk}
If there are coverings $\lambda_1:\hat{G}\to G_1$ and $\lambda_2:\hat{G}\to G_2$ such that $\hat{G}$ is connected and has a finite core $C$ (the \textit{core} of $\hat{G}$ is a subgraph $C$, minimal with respect to inclusion, such that the induced inclusion of topological realisations $|C|\xhookrightarrow{}|\hat{G}|$ is a homotopy equivalence - $C$ is unique if  $\hat{G}$ is not a tree), then we can arrange for there to be a covering $\tau:\hat{G}\to G$ of the finite graph $G$ that covers $G_1$ and $G_2$. Moreover we can do this so that the following diagram commutes.
\begin{equation}\label{corecommute}
\begin{tikzcd}[
ar symbol/.style = {draw=none,"#1" description,sloped},
isomorphic/.style = {ar symbol={\cong}},
equals/.style = {ar symbol={=}},
subset/.style = {ar symbol={\subset}}
]
&C\ar[bend right=30]{ddl}[swap]{\lambda_1}\ar[bend left=30]{ddr}{\lambda_2}\ar{d}{\tau}\\
&G\ar{dl}{\mu_1}\ar{dr}[swap]{\mu_2}\\
G_1&&G_2
\end{tikzcd}
\end{equation}
In general one cannot replace $C$ with $\hat{G}$ in this diagram. The way to construct such a $G$ is to let $q:T\to\hat{G}$ be a universal cover and define covering maps $p_1:=\lambda_1q:T\to G_1$ and $p_2:=\lambda_2q:T\to G_2$. For each $v\in V(C)$ let $z(v)$ be a lift to $T$, and let $s_{z(v)}\in\calS$ be defined by
$$s_{z(v)}:=p_2|_{\text{star}(z(v))}(p_1|_{\text{star}(z(v))})^{-1}.$$
 Then run the proof of the theorem as normal, but when it comes to piecing together $G$ with the edge map $\partial_0:E(G)\to V(G)$, do it so that we have an embedding $C\xhookrightarrow{}G$ given by $v\in V(C)\mapsto(s_{z(v)},l_v)$ and $d\in E(C)\mapsto(\lambda_1(d),\lambda_2(d),k_d)$ for $d\in E(C)$ and some choices of $l_v$ and $k_d$ (increasing the value of $N$ if needed). Note that this will satisfy (\ref{Gedgemap}) because if $\partial_0d=v$ then $\partial_0\lambda_1(d)=\lambda_1(v)=p_1(z(v))$ and
 \begin{align*}
s_{z(v)}\lambda_1(d)&=p_2|_{\text{star}(z(v))}(p_1|_{\text{star}(z(v))})^{-1}(\lambda_1(d))\\
&=\lambda_2q(\lambda_1q|_{\text{star}(z(v))})^{-1}(\lambda_1(d))\\
&=\lambda_2(d).
\end{align*}
By construction, the embedding $C\xhookrightarrow{}G$ will make (\ref{corecommute}) commute. The rest of $\hat{G}$ comprises subtrees $Y$, each with one vertex $v$ in $C$, so we can extend $C\xhookrightarrow{}G$ to a covering $\tau:\hat{G}\to G$ by lifting each map $\lambda_1:Y\to G_1$ up to $G$, with $v$ going to $(s_{z(v)},l_v)$.
\end{remk}

\bigskip
\section{Graph of objects version}\label{sec:GraphofObjects}

In this section we generalise Leighton's Theorem to graphs of objects, which concerns certain ``rigid'' types of coverings of graph of spaces.

One limitation to generalising Leighton's Theorem to graphs of spaces is illustrated by the following example.

\begin{exmp}\label{exmp:Baumslag}
	The Baumslag Solitar groups BS(1,3) and BS(2,2) both arise as fundamental groups of graphs of spaces with a single circular vertex space and a single circular edge space. For BS(1,3) the maps from edge space to vertex space will be coverings of degree 1 and 3, whereas for BS(2,2) they will both be coverings of degree 2. In both cases the sum of these degrees equals 4, and so both graphs of spaces are covered by a 4-regular tree of spaces in which all edge and vertex spaces are copies of the real line and all edge maps are homeomorphisms. However, there is no common finite cover for these graphs of spaces because BS(1,3) and BS(2,2) are not commensurable (BS(1,3) is solvable whereas BS(2,2) contains a non-abelian free subgroup).
\end{exmp}

This example would no longer work if we endowed the vertex and edge spaces with metrics and required the edge maps and coverings to respect these metrics, as then the tree of spaces would have two very different metrics induced by BS(1,3) and BS(2,2). This shows that the category of spaces we use to define our graphs of spaces matters. However, there are other more restrictive categories of spaces where Leighton's Theorem still fails. For example, in the category of graphs, Wise constructs a finite graph of spaces with non-residually finite fundamental group that is covered by a 4-regular tree of spaces in which all edge and vertex spaces are 6-regular trees and edge maps are isomorphisms \cite{Wise}. Such a tree of spaces also covers a finite graph of spaces whose fundamental group is a product of free groups, hence not commensurable to the group that Wise constructed.

It is clear then that one must impose some strong conditions in order to get a version of Leighton's Theorem for graphs of spaces. We do this by working with graphs of spaces with respect to a given category of spaces, and by restricting to coverings between graphs of spaces that induce isomorphisms between vertex spaces rather than just coverings. To emphasise that this is not the general setting we call them \emph{graphs of objects} instead of graphs of spaces. The full definitions of graphs of objects and their coverings are given below. Note that these definitions actually work with any category, not just categories of spaces - see Example \ref{exmp:spacecategories} for some suggestions of categories that could be used.

\begin{defn}(Graph of Objects)\label{graphobj}\\
	Let $\calC$ be a category and let $\calM_1\subset\calM_2\subset$ Hom$(\calC)$ be such that $($Ob$(\calC),\calM_1)$ and $($Ob$(\calC),\calM_2)$ are subcategories. Suppose that $($Ob$(\calC),\calM_1)$ is a groupoid. 
	
  A \textit{graph of objects} $X$ with respect to $(\calC,\calM_1,\calM_2)$ consists of
\begin{itemize}
\item a graph $G=G_X$,
\item objects $X_v\in$ Ob$(\calC)$ for $v\in V(G)$, called \textit{vertex objects},
\item objects $X_e\in$ Ob$(\calC)$ for $e\in E(G)$, called \textit{edge objects}, with $X_e=X_{\overline{e}}$,
\item and morphisms in $\calM_2$ 
\begin{align*}
\phi^e_0:X_e\to X_{\partial_0 e}\\
\phi^e_1:X_e\to X_{\partial_1 e},
\end{align*}
for $e\in E(G)$, called \textit{edge morphisms}, such that $\phi^e_0=\phi^{\overline{e}}_1$ and $\phi^e_1=\phi^{\overline{e}}_0$.
\end{itemize}
We say that a graph of objects $X$ is \textit{finite} if $G_X$ is finite.
\end{defn}

\begin{defn}(Morphisms between graphs of objects)\label{morgraphobj}\\
	Let $(\calC,\calM_1,\calM_2)$ be as above.
A \textit{morphism} $f:X\to Y$ between graphs of objects with respect to $(\calC,\calM_1,\calM_2)$ consists of
\begin{itemize}
\item a graph morphism $\hat{f}:G_X\to G_Y$,
\item morphisms $f_v:X_v\to X_{\hat{f}(v)}$ in $\calM_1$ for $v\in V(G_X)$,
\item and morphisms $f_e:X_e\to X_{\hat{f}(e)}$ in $\calM_1$ for $e\in E(G_X)$,
\end{itemize}
such that $f_e=f_{\overline{e}}$ and
\begin{equation}\label{2squares}
  \begin{tikzcd}[
  ar symbol/.style = {draw=none,"#1" description,sloped},
  isomorphic/.style = {ar symbol={\cong}},
  equals/.style = {ar symbol={=}},
  subset/.style = {ar symbol={\subset}}
  ]
 X_e\ar{d}{\phi^e_0}\ar{r}{f_e}& Y_{\hat{f}(e)}\ar{d}{\phi^{\hat{f}(e)}_0}\\
 X_u\ar{r}{f_u}&Y_{\hat{f}(u)}
  \end{tikzcd}\hspace{3cm}
    \begin{tikzcd}[
  ar symbol/.style = {draw=none,"#1" description,sloped},
  isomorphic/.style = {ar symbol={\cong}},
  equals/.style = {ar symbol={=}},
  subset/.style = {ar symbol={\subset}}
  ]
 X_e\ar{d}{\phi^e_1}\ar{r}{f_e}& Y_{\hat{f}(e)}\ar{d}{\phi^{\hat{f}(e)}_1}\\
 X_v\ar{r}{f_v}&Y_{\hat{f}(v)}
  \end{tikzcd}
\end{equation}

commute whenever $e\in E(G_X)$ is an edge from $u$ to $v$. Note that if the first square commutes for all edges $e$ then the second square will also commute for all edges as a consequence of the relations between the morphisms $\phi^e_0,\phi^e_1$ given in Definition \ref{graphobj}.
\end{defn}
\begin{defn}(Coverings of graphs of objects)\\
We say that a morphism $f:X\to Y$ between graphs of objects is a \textit{covering} if $\hat{f}$ is a covering of graphs. We say that $X$ is a \textit{cover} of $Y$. Similarly, we say that $f:X\to X$ is an \textit{automorphism} if $\hat{f}$ is a graph automorphism. Let Aut$(X)$ denote the group of automorphisms of $X$.
\end{defn}

\begin{exmp}\label{exmp:spacecategories}
	In the following table we give some examples of triples $(\calC,\calM_1,\calM_2)$ that can be used to define graphs of objects.\\
	\\
	\begin{tabular}{c||c|c|c}
		Name & $\calC$ & $\calM_1$ & $\calM_2$\\\hline
		\parbox[t]{3.5cm}{Graph of\\ topological spaces} & \parbox[t]{3.5cm}{topological spaces} & \parbox[t]{3.5cm}{homeomorphisms} & \parbox[t]{3.5cm}{continuous maps}\\[6mm]\hline
		\parbox[t]{3.5cm}{Graph of finite\\ simplicial complexes} & \parbox[t]{3.5cm}{finite simplicial\\ complexes} & \parbox[t]{3.5cm}{simplicial isomorphisms} & \parbox[t]{3.5cm}{simplicial maps}\\[6mm]\hline
		\parbox[t]{3.5cm}{Graph of finite\\ cube complexes} & \parbox[t]{3.5cm}{finite non-positively\\ curved cube complexes} & \parbox[t]{3.5cm}{cubical isomorphisms} & \parbox[t]{3.5cm}{locally isometric\\ cubical immersions}\\[6mm]\hline
		\parbox[t]{3.5cm}{Surface amalgams} & \parbox[t]{3.5cm}{compact surfaces\\ with boundary} & \parbox[t]{3.5cm}{homeomorphisms up\\ to isotopy} & \parbox[t]{3.5cm}{immersions up\\ to isotopy}\\[6mm]\hline
		\parbox[t]{3.5cm}{Graph of groups} & \parbox[t]{3.5cm}{groups} & \parbox[t]{3.5cm}{group isomorphisms} & \parbox[t]{3.5cm}{group monomorphisms}
	\end{tabular}\\
\\
Observe that the $\calM_1$ morphisms are all isomorphisms in the appropriate categories, as was required in Definition \ref{graphobj}.
Comparing with Definition \ref{graphobj}, we see that graphs of objects corresponding to the first four rows are just examples of graphs of spaces with respect to different categories. As discussed earlier, the difference in definitions comes when we consider morphisms and coverings. A morphism of graphs of spaces is usually defined in a similar way to Definition \ref{morgraphobj}, except that the morphisms $f_v$ and $f_e$ between vertex and edge spaces are not required to lie in $\calM_1$. Similarly, a covering of graphs of spaces usually requires that the maps $f_v$ and $f_e$ are coverings of vertex and edge spaces, but this is still weaker than having them in $\calM_1$, and the map $\hat{f}$ of underlying graphs is not required to be a covering of graphs. We can think of coverings of graphs of objects as ``rigid'' examples of coverings of graphs of spaces.
Note in particular that the coverings of graphs of spaces described in Example \ref{exmp:Baumslag} and the subsequent paragraph are not coverings of graphs of objects.

The last entry in the table, graphs of groups, is not an example of a graph of spaces, but it still gives a valid example of a graph of objects. However the usual notions of morphism and covering between graphs of groups are again weaker than those for graphs of objects, because we require the maps $f_v$ and $f_e$ to be group isomorphisms for graphs of objects.
\end{exmp}
\bigskip

To prove Leighton's Theorem for graphs of objects it turns out that we need a certain finiteness condition on the edge spaces, and so we'll need the following definition.

\begin{defn}(Isotropy groups)\\
	Let $X$ be a graph of objects and $H\leqslant$ Aut$(X)$. For each $e\in E(G_X)$ and $v\in V(G_X)$ define the \textit{isotropy groups} of $e$ and $v$ in $H$ as
	\begin{align*}
	H(e):=\{h_e\mid h\in H,\,\hat{h}(e)=e\}\leqslant\Aut_{\calM_1}(X_e),\\
	H(v):=\{h_v\mid h\in H,\,\hat{h}(v)=v\}\leqslant\Aut_{\calM_1}(X_v).
	\end{align*}
	These are different from the stabilisers $H_e$ and $H_v$; there are homomorphisms $H_e\to H(e)$ and $H_v\to H(v)$ sending $h$ to $h_e$ or $h_v$ respectively, these are surjective but not necessarily injective.
	
\end{defn}
\bigskip

We can now state our version of Leighton's Theorem for graphs of objects. See Example \ref{false} for why the assumption of finite edge isotropy groups is necessary.

\begin{thm}(Graph of Objects Leighton's Theorem)\\\label{thm:BasicObjectLeighton}
Let $X^1$ and $X^2$ be finite graphs of objects covered by a tree of objects $X$. If $\Aut(X)$ has finite edge isotropy groups, then $X^1$ and $X^2$ have a common finite cover.
\end{thm}

\begin{remk}
	The assumption that $\Aut(X)$ has finite edge isotropy groups will be automatically satisfied if the edge objects have finite $\calM_1$-automorphism groups. This is the case for graphs of finite simplicial or cube complexes from Example \ref{exmp:spacecategories}, and also for surface amalgams if we assume that the edge objects are annuli. For the latter example the vertex objects might have infinite $\calM_1$-automorphism groups (in this case each vertex object is a surface and the $\calM_1$-automorphism group is the mapping class group), but this is not a problem because Theorem \ref{thm:BasicObjectLeighton} does not require $\Aut(X)$ to have finite vertex isotropy groups.
\end{remk}
\bigskip

We will actually prove a stronger version of Theorem \ref{thm:BasicObjectLeighton} which incorporates a notion of symmetry-restricted closure analogous to that of Theorem \ref{thm:SymLeighton}. For this we need two more definitions.

\begin{defn}(Symmetry-restricted closure)\\\label{defn:ObSymClosure}
Let $X$ be a graph of objects and $H\leqslant$ Aut$(X)$. Define the \emph{symmetry-restricted closure of $H$} to be the subgroup $\mathscr{S}(H)\leqslant\Aut(X)$ consisting of automorphisms $g$ such that:
\begin{enumerate}[(1)]
	\item For all $e\in E(G_X)$ there exists $h\in H$ with $\hat{g}(e)=\hat{h}(e)$ and $g_e=h_e$.
	\item For all $v\in V(G_X)$ there exists $h\in H$ with $\hat{g}(v)=\hat{h}(v)$ and $g_v=h_v$.
\end{enumerate}
\end{defn}

\begin{defn}(Deck transformations)\\\label{fundgp}
	If $f:\tilde{X}\to X$ is a covering of graphs of objects such that $T:=G_{\tilde{X}}$ is a tree, then $\pi_1 G_X$ acts on $T$ as the group of deck transformations of the cover $\hat{f}:T\to G_X$. We also get an action of $\pi_1 G_X$ on $\tilde{X}$ defined by the homomorphism $\rho:\pi_1 G_X\to\Aut(\tilde{X})$, where $\widehat{\rho(g)}$ is given by the aforementioned action of $\pi_1 G_X$ on $T$, and the morphisms $\rho(g)_v,\rho(g)_e\in\calM_1$ are uniquely determined by the equations $f_v=f_{g(v)}\rho(g)_v$ and $f_e=f_{g(e)}\rho(g)_e$ (remember $\calM_1$ forms a groupoid). It is easy to check that $\rho(g)$ satisfies the commuting squares (\ref{2squares}) and that $\rho$ is a homomorphism. Clearly $f=f\rho(g)$ for all $g\in\pi_1 G_X$, and so we call the image of $\rho$ the \emph{group of deck transformations of the cover $f:\tilde{X}\to X$}.
\end{defn}
\bigskip

Our symmetry-restricted version of Leighton's Theorem for graphs of objects is the following.

\begin{thm}\label{thm:SymObjectLeighton}
	Let\begin{align*}
	f^1:\tilde{X}\to X^1\\
	f^2:\tilde{X}\to X^2
	\end{align*}
	be coverings of graphs of objects, with $G_1:=G_{X^1}$ and $G_2:=G_{X^2}$ both finite, and $T:=G_{\tilde{X}}$ a tree. Let $\Gamma_1$ and $\Gamma_2$ be the groups of deck transformations for $f^1$ and $f^2$, and suppose that $\Gamma_1,\Gamma_2\leqslant H\leqslant\Aut(\tilde{X})$.
	Suppose also that $H$ has finite edge isotropy groups. Then $X^1$ and $X^2$ have a common finite cover $X$, and there exists $g\in\mathscr{S}(H)$ that fits into the following commutative diagram of coverings.
	\begin{equation}\label{objdiagram}
	\begin{tikzcd}[
	ar symbol/.style = {draw=none,"#1" description,sloped},
	isomorphic/.style = {ar symbol={\cong}},
	equals/.style = {ar symbol={=}},
	subset/.style = {ar symbol={\subset}}
	]
	\tilde{X}\ar{dd}[swap]{f^1}\ar{rr}{g}\ar{dr}&&\tilde{X}\ar{dl}\ar{dd}{f^2}\\
	&X\ar{dl}{\mu^1}\ar{dr}[swap]{\mu^2}\\
	X^1&&X^2
	\end{tikzcd}
	\end{equation}
\end{thm}
\bigskip

This theorem can be stated more concisely by working entirely in the tree of objects $\tilde{X}$ as follows. 

\begin{thm}\label{thm:SymObjectLeighton2}
	Let $X$ be a tree of objects with $G_X=T$, and let $H\leqslant\Aut(X)$ be a subgroup with finite edge isotropy groups. Suppose that $\Gamma_1,\Gamma_2\leqslant H$ act freely cocompactly on $T$. Then there exists $g\in\mathscr{S}(H)$ such that $\Gamma_1^g$ is commensurable with $\Gamma_2$ in $\Aut(X)$.
\end{thm}
\bigskip

The equivalence of Theorems \ref{thm:SymObjectLeighton} and \ref{thm:SymObjectLeighton2} follows from the Galois correspondence for coverings of graphs of objects, which we describe in the following remark.

\begin{remk}\label{remk:GaloisObjGraph}
Let $\tilde{X}$ be a tree of objects with underlying tree $T$, and let $f:\tilde{X}\to X$ be a covering of a finite graph of objects with deck transformation group $\Gamma$. The usual covering theory of graphs gives us a correspondence between subgroups of $\Gamma$ and intermediate covers of $T\to G_X$; and given an intermediate cover of $T\to G_X$ there is a unique way of assigning edge and vertex objects (up to $\calM_1$-isomorphism) and morphisms that make diagrams (\ref{2squares}) commute, as we explain below. Thus we have a correspondence between subgroups of $\Gamma$ and intermediate covers of $f:\tilde{X}\to X$ (up to isomorphism).

An intermediate cover of graphs $T\overset{\hat{g}}{\to}G_Y\overset{\hat{h}}{\to}G_X$ induces an intermediate cover of graphs of objects $\tilde{X}\overset{g}{\to}Y\overset{h}{\to}X$ as follows (with $hg=f$). Define vertex and edge objects for $Y$ by $Y_u:=X_{\hat{h}(u)}$ and $Y_e:=X_{\hat{h}(e)}$, define the morphisms $h_u$ and $h_e$ by the identity, and define the edge morphisms in $Y$ so that diagram (\ref{2squares}) commutes for the map $h:Y\to X$. For  $u\in VT$ and $e\in ET$ we define $g_u:=h_{\hat{g}(u)}^{-1}f_u:\tilde{X}_u\to Y_{\hat{g}(u)}$ and $g_e:=h_{\hat{g}(e)}^{-1}f_e:\tilde{X}_e\to Y_{\hat{g}(e)}$, the map $g:\tilde{X}\to Y$ satisfies (\ref{2squares}) because $h$ and $f$ do. The uniqueness of this construction up to $\calM_1$-isomorphism essentially follows because each stage of the construction was forced by diagram (\ref{2squares}).
\end{remk}

Before proving Theorem \ref{thm:SymObjectLeighton}, we give an example to show that the assumption of finite edge isotropy groups is necessary.

\begin{exmp}\label{false}
	We work with graphs of topological spaces as in Example \ref{exmp:spacecategories}.
	We can exploit the infinite symmetry of the circle $S^1\subset\mathbb{C}$ to build finite graphs of objects $X^1, X^2$, with a common cover $\tilde{X}$, but no common finite cover.
	\begin{itemize}
		\item Let $X^1$ have a single vertex object $X^1_v$ and a single edge object $X^1_e$, both equal to $S^1$, and let the edge maps $\phi^e_0,\phi^e_1$ both be the identity.
		\item Let $X^2$ also have a single vertex object $X^2_v$ and a single edge object $X^2_e$, both equal to $S^1$, and let $\phi^e_0$ be the identity, but this time take $\phi^e_1$ to be the rotation $r:z\mapsto e^i z$. The important feature of this rotation is that it has infinite order in the homeomorphism group of $S^1$.
		\item Let $\tilde{X}$ have underlying graph consisting of an infinite chain of edges $(e_i)_{i\in\mathbb{Z}}$ and vertices $(v_i)_{i\in\mathbb{Z}}$ with $\partial_0 e_i=v_i$ and $\partial_1 e_i=v_{i+1}$. Let all the edge and vertex objects of $\tilde{X}$ equal $S^1$ and let all edge maps be the identity map.\par 
		There are covers
		\begin{align*}
		f^1:\tilde{X}\to X^1\\
		f^2:\tilde{X}\to X^2
		\end{align*}
		defined by
		\begin{enumerate}
			\item $\hat{f}^1(v_i)=v$, $\hat{f}^1(e_i)=e$, and $f^1_{v_i}=f^1_{e_i}=$ id$_{S^1}$ for all $i\in\mathbb{Z}$.
			\item $\hat{f}^2(v_i)=v$, $\hat{f}^2(e_i)=e$, and $f^2_{v_i}=f^2_{e_i}=r^i$ for all $i\in\mathbb{Z}$.
		\end{enumerate}
	\end{itemize}
	Why do $X^1$ and $X^2$ have no common finite cover? Well any finite cover $g^1:X\to X^1$ must be a circuit of copies of $S^1$, more precisely it must take the following form (up to isomorphism of $X$).
	\begin{itemize}
		\item $V(X)=\{v_1,...,v_n\}$ and $E(X)=\{e_1,...,e_n\}$ for some $n\in\mathbb{N}$.
		\item $\partial_0 e_i=v_i$ ($1\leq i\leq n$), $\partial_1 e_i=v_{i+1}$ ($1\leq i\leq n-1$) and $\partial_1 e_n=v_1$.
		\item $\hat{g}^1(v_i)=v$, $\hat{g}^1(e_i)=e$ for all $i$.
		\item $g^1_{v_i}=g^1_{e_i}=$ id$_{S^1}$ for all $i$.
	\end{itemize} 
	If there was a covering $g^2:X\to X^2$, there would be two possibilities for $\hat{g}^2$ corresponding to $\hat{g}^2(e_1)=e$ or $\overline{e}$. Suppose we're in the first case (the second will lead to a contradiction similarly), then $\hat{g}^2=\hat{g}^1$. Put $a=g^2_{v_1}$. The commutative squares (\ref{2squares}) then force $g^2_{e_1}=a, g^2_{v_2}=ra, g^2_{e_2}=ra, g^2_{v_3}=r^2a, g^2_{e_3}=r^2a$ and so on. But taking this right round the circuit we deduce that $g^2_{v_1}=r^na$, which is a contradiction because $r^n\neq$ id$_{S^1}$.
\end{exmp}

\bigskip
\begin{breakproof}[Proof of Theorem \ref{thm:SymObjectLeighton}]
	As for the original Leighton's Theorem, we will build a common finite cover by first constructing a finite groupoid $\calS$ consisting of ``maps between stars'' in $\tilde{X}$. But now each star is not just a set of edges meeting at a common vertex, as each star is endowed with the extra data of edge objects and edge morphisms. Thus these ``maps between stars'' in $\calS$ must have the additional data of morphisms between edge objects, and these morphisms must act naturally with respect to the edge morphisms. Once we have defined $\calS$, the proof will follow that of Theorem \ref{Leighton} quite closely - but with an extra step at the end to verify that we get a commutative diagram as in (\ref{objdiagram}).
\begin{enumerate}[Step 1:]
	\item Before constructing $\calS$, we will define a general notion of ``star map''.\par 
	Given graphs of objects $X,\,Y$ and $u\in V(G_X),\, v\in V(G_Y)$, a \textit{star map} from $u$ to $v$ is given by the data $s=(\hat{s},\,s_e\,:\,e\in\text{ star}(u))$, where:
	\begin{enumerate}
		\item $\hat{s}:\text{ star}(u)\to\text{ star}(v)$ is a bijection.
		\item $s_e:X_e\to Y_{\hat{s}(e)}$ is a morphism in $\calM_1$. There must also exist a morphism $s_u:X_u\to X_v$ in $\calM_1$ such that $s_u\phi^e_0=\phi^{\hat{s}(e)}_0s_e$ for all $e\in$ star$(u)$ - but $s_u$ is not part of the data of $s$ (for a given $s$ there could be many choices of $s_u$, whenever we write $s_u$ we refer to some arbitrary choice).
	\end{enumerate}
If $Z$ is another graph of objects and $w\in V(G_Z)$ and $t$ is a star map from $v$ to $w$, then we can compose $s$ with $t$ to produce a star map $ts$ from $u$ to $w$ with $\widehat{ts}=\hat{t}\hat{s}$, $(ts)_e=t_{\hat{s}(e)}s_e$ and $(ts)_u=t_vs_u$. There is a natural notion of identity star map at a vertex $u$ in which the morphisms $s_e$ will be identity morphisms, and any star map will have an inverse by replacing $\hat{s}$ and the morphisms $s_e$ with their inverses. Therefore the class of all star maps forms a category with inverses, where the objects are vertices in graphs of objects, and a star map $s$ from $u$ to $v$ has $i(s)=u$ and $t(s)=v$.\par 
If $f:X\to Y$ is a covering of graphs of objects, then for each $u\in V(G_X)$ there is a star map $f^u$ from $u$ to $\hat{f}(u)$ in which $\hat{f}^u$ is the restriction of $\hat{f}$ to star$(u)$, $f^u_e:=f_e$ and $f^u_u:=f_u$. And if $g:Y\to Z$ is another covering, then it is easy to check that $(gf)^u=g^{\hat{f}(u)}f^u$.\par 
Our groupoid $\calS$ will be the subcategory of star maps, with Ob$(\calS)=V(G_1)\sqcup V(G_2)$, and Hom$(\calS)$ consisting of star maps
\begin{equation}\label{starmap2}
s=(f^i)^{\hat{h}(z)}h^z((f^j)^z)^{-1}
\end{equation}
for $z\in V(T)$, $h\in H$ and $i,j\in\{1,2\}$. This is a star map from $u:=\hat{f}^j(z)$ to $v:=\hat{f}^i\hat{h}(z)$. Intuitively, think of $s$ as lifting the star of $u$ up to the star of $z$, mapping across to the star of $\hat{h}(z)$ by $h$, and then projecting down to the star of $v$ - a cartoon of this is given in Figure \ref{fig:starmap}.
 \begin{figure}[H]
 	\definecolor{RED}{HTML}{FF0000}
 	\definecolor{BLUE}{HTML}{0000FF}
 	\definecolor{ORANGE}{HTML}{FF6600}
 	\definecolor{PURPLE}{HTML}{800080}
 	\centering
 	\begin{overpic}[width=.7\textwidth,clip=true]{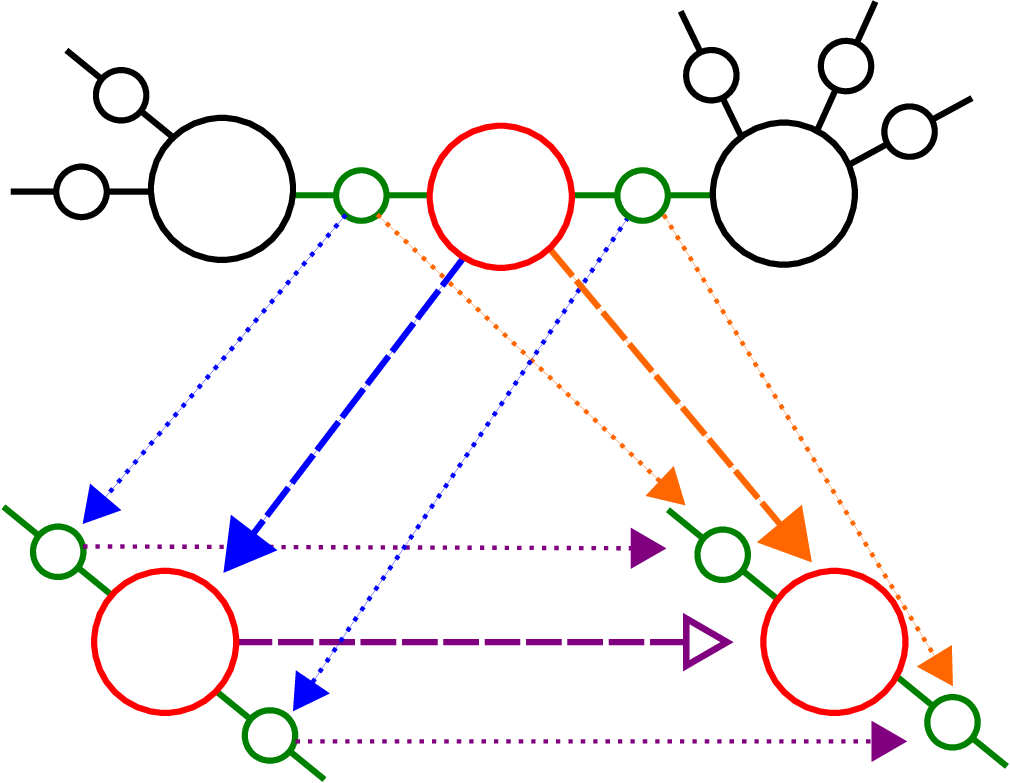}
 		\put(45,55){\Huge\textcolor{RED}{$\tilde{X}_z$}}
 		\put(5,40){\Huge\textcolor{BLUE}{$(f^j)^z$}}
 		\put(78,40){\Huge\textcolor{ORANGE}{$(f^i)^{\hat{h}(z)}h^z$}}
 		\put(50,8){\Huge\textcolor{PURPLE}{$s$}}	
 		\put(10,11){\Huge\textcolor{RED}{$X^j_u$}}
 		\put(78,11){\Huge\textcolor{RED}{$X^i_v$}}
 	\end{overpic}
	\caption{Diagram of the star map $s$ from (\ref{starmap2}).}\label{fig:starmap}
\end{figure}

 Unlike for the original Leighton's theorem, it is not obvious that $\calS$ is finite, so we prove this now.\\

\begin{claim}
$\calS$ is finite.
\end{claim}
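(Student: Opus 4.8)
The plan is to show that every morphism of $\Gamma$ is a ``shadow'' of an element of the group $H:=\langle\pi_1 G_1,\pi_1 G_2\rangle<\Aut(\tilde{X})$, and then to feed in the hypothesis that the isotropy groups $\Upsilon_e(H)$ are finite. Since $\mathrm{Ob}(\Gamma)=V(G_1)\sqcup V(G_2)$ is finite, it is enough to bound $|\Gamma(x,y)|$ for each pair of objects $x,y$. Here I will freely use the induced star map $g^z$ of an automorphism $g\in\Aut(\tilde{X})$ at a vertex $z$ (automorphisms are coverings, so this is the star map $(f)^u$ from the definition), and the chain rule $(g'g)^z=g'^{\,\hat g(z)}\circ g^z$.

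First I would establish a normal form: for every $\gamma\in\Gamma$ there are $g\in H$ and $z,z'\in V(T)$ with $\hat g(z)=z'$ such that
\[
\gamma \;=\; (f^a)^{z'}\circ g^{z}\circ\big((f^b)^{z}\big)^{-1},
\]
where $a,b\in\{1,2\}$ are chosen so that $t(\gamma)\in V(G_a)$ and $i(\gamma)\in V(G_b)$. The base case is $\gamma^z=(f^2)^z\circ\mathrm{id}^z\circ((f^1)^z)^{-1}$ (and similarly $(\gamma^z)^{-1}$), with $g=\mathrm{id}\in H$. For the inductive step one left-multiplies a word of this form by a generator $\gamma^w$ or $(\gamma^w)^{-1}$; the point is that for $h\in\pi_1 G_l$ one has $(f^l)^{\hat h(z)}\circ h^z=(f^l)^z$, so after conjugating $w$ into the relevant $\pi_1 G_a$-orbit one can absorb the generator using the chain rule, producing a new group element in $H$ while preserving the form. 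This bookkeeping (in particular keeping track of the subscripts $a,b$) is the fiddliest part of the argument, but it is routine.

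Now fix $x,y$ and pick once and for all representatives $z_0\in(\hat f^b)^{-1}(x)$ and $w_0\in(\hat f^a)^{-1}(y)$. Applying the same absorption trick to the normal form (replacing $z,z'$ by $z_0,w_0$ at the cost of replacing $g$ by another element of $H$), every $\gamma\in\Gamma(x,y)$ can be written as $(f^a)^{w_0}\circ g^{z_0}\circ((f^b)^{z_0})^{-1}$ for some $g\in H$ with $\hat g(z_0)=w_0$, and such a $\gamma$ is determined by the star map $g^{z_0}$. If no such $g$ exists then $\Gamma(x,y)=\emptyset$; otherwise fix one such $g_1$, so that $\{g\in H:\hat g(z_0)=w_0\}=g_1\Stab_H(z_0)$ and, for $g=g_1 s$ with $s\in\Stab_H(z_0)$, we have $g^{z_0}=g_1^{\,z_0}\circ s^{z_0}$. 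Hence it remains to show the set $\{s^{z_0}:s\in\Stab_H(z_0)\}$ of star maps is finite.

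Finally, $\text{star}(z_0)$ is finite (it has the size of $\text{star}(x)$, a vertex of the finite graph $G_b$), so $s\mapsto\hat s|_{\text{star}(z_0)}$ maps $\Stab_H(z_0)$ to $\text{Sym}(\text{star}(z_0))$ with finite image; let $K$ be its finite-index kernel. For $s\in K$ the star map $s^{z_0}$ has trivial underlying bijection, so it is determined by the tuple $(s_d)_{d\in\text{star}(z_0)}$, and since $\hat s(d)=d$ each $s_d$ lies in $\Upsilon_d(H)$, which is finite by hypothesis. Thus $\{s^{z_0}:s\in K\}$ is finite, and finite index gives finiteness of $\{s^{z_0}:s\in\Stab_H(z_0)\}$, hence of $\Gamma(x,y)$, hence of $\Gamma$. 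The main obstacle is really the first step --- recognising and cleanly formalising that $\Gamma$ consists of shadows of elements of $H$ --- since once that is in place the finiteness of the $\Upsilon_e$ does all the remaining work.
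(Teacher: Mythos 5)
Your proposal is correct and follows essentially the same route as the paper: the crucial step in both is the ``absorption'' identity $((f^l)^{z})^{-1}(f^l)^{z'}=h^{z'}$ for $h\in\pi_1G_l$, which collapses every alternating product of generators to a normal form $(f^a)^{z'}\circ g^{z}\circ((f^b)^{z})^{-1}$ with $g\in\langle\pi_1G_1,\pi_1G_2\rangle$, after which finiteness is extracted from the finite symmetric group of a star together with the hypothesis that the isotropy groups $\Upsilon_e$ are finite. The only (cosmetic) difference is that the paper first reduces $\Gamma(u,v)$ to $\Gamma(u,u)$ and applies the finite-image homomorphism $\theta:\Gamma(u,u)\to\Aut(\mathrm{star}(u))$ there, whereas you apply the analogous map upstairs to $\Stab_H(z_0)$.
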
\\
\begin{claimproof}
It is enough to show that each $\calS(u,v)$ is finite. But $\calS(u,v)$ is a coset of $\calS(u,u)$, so it suffices to show that each $\calS(u,u)$ is finite. There is a homomorphism $\theta:\calS(u,u)\to$ Aut$($star$(u))$ with finite image, so it is enough to show that $\ker\theta$ is finite (Aut$($star$(u))$ is just the permutation group of the set star$(u)$).

Let $s\in\ker\theta$ and suppose $u\in V(G_1)$. Fix $z_0\in V(T)$ with $\hat{f}^1(z_0)=u$. By (\ref{starmap2}) we can write
\begin{equation*}
s=(f^1)^{z'}h^z((f^1)^z)^{-1}
\end{equation*}
for some $z,z'\in V(T)$ and $h\in H$ with $\hat{h}(z)=z'$. But then $\hat{f}^1(z)=\hat{f}^1(z')=u$, thus there exist $g^1,g^2\in\Gamma_1$ with $\hat{g}^1(z_0)=z$ and $\hat{g}^2(z')=z_0$, and we get
\begin{equation*}
s=(f^1)^{z_0}(g^2)^{z'}h^z(g^1)^{z_0}((f^1)^{z_0})^{-1}.
\end{equation*}
Moreover, $s\in\ker\theta$ so $\widehat{g^2hg^1}$ must fix star$(z_0)$, hence $(g^2hg^1)_e\in H(e)$ for each $e\in$ star$(z_0)$. By assumption the groups $H(e)$ are finite, hence there are only finitely many possibilities for $s\in\ker\theta$, as required.
\end{claimproof}
\bigskip

\item As for the original Leighton's Theorem, we now define an action of $\calS$ on a set of edge-related things, and show that it respects some notion of edge inversion. Define a finite set
\begin{equation*}
A:=\{(e,\hat{a}(e),a_e)\mid e\in E(G_1)\sqcup E(G_2),\,a\in\calS(\partial_0e,-)\}.
\end{equation*}
The observant will note that $A$ can also be given the structure of a groupoid, but we won't need this here.\par 
 We define an action of $\calS$ on $A$ by $s\cdot(e,\hat{a}(e),a_e):=(e,\widehat{s a}(e),(s a)_e)$ for $s\in\calS(t(a),-)$, with associated map $\varepsilon:A\to V(G_1)\sqcup V(G_2)$ given by $\varepsilon(e,\hat{a}(e),a_e)=\partial_0\hat{a}(e)=t(a)$. 
 
 We want to define an involution $A\to A$ given by $(e,\hat{a}(e),a_e)\mapsto(\overline{e},\overline{\hat{a}(e)},a_e)$. This is well-defined by the following claim.\\
 
 \begin{claim}
 	For any $(e,\hat{a}(e),a_e)\in A$ there exists $a'\in\calS(\partial_0\overline{e},-)$ with $\hat{a'}(\overline{e})=\overline{\hat{a}(e)}$ and $a'_{\overline{e}}=a_e$.
 \end{claim}\\
\begin{claimproof}
	As in (\ref{starmap2}), write
	$$a=(f^i)^{\hat{h}(z)}h^z((f^j)^z)^{-1}$$
	for $z\in V(T)$, $h\in H$ and $i,j\in\{1,2\}$, with $\hat{f}^j(z)=\partial_0e$. Let $\hat{e}\in\star*(z)$ with $\hat{f}^j(\hat{e})=e$ and put $z':=\partial_1\hat{e}$. Then define $a'\in\calS(\partial_0\overline{e},-)$ by
	$$a=(f^i)^{\hat{h}(z')}h^{z'}((f^j)^{z'})^{-1}.$$
	We have $i(a')=\hat{f}^j(z')=\hat{f}^j(\partial_1\hat{e})=\partial_1e=\partial_0\overline{e}$ as required. We also have $\hat{f}^j(\overline{\hat{e}})=\overline{e}$, so $\hat{a'}(\overline{e})=\hat{f}^i\hat{h}(\overline{\hat{e}})=\overline{\hat{f}^i\hat{h}(\hat{e})}=\overline{\hat{a}(e)}$. And finally we have
		\begin{align*}
		a'_{\overline{e}}&=(f^ih)_{\overline{\hat{e}}}(f^j_{\overline{e}})^{-1}\\
		&=(f^ih)_{\hat{e}}(f^j_e)^{-1}\\
		&=a_e.
		\end{align*}
\end{claimproof}\\ 
 
 $A$ can be partitioned into sets
 \begin{equation*}
 A(e):=\{(e,\hat{a}(e),a_e)\mid a\in\calS(\partial_0e,-)\},
 \end{equation*}
 for $e\in E(G_1)\sqcup E(G_2)$. These sets are related to the action of $\calS$ by the following claim.\\
\begin{claim}
$\calS\cdot(e,\hat{a}(e),a_e)=A(e)$
\end{claim}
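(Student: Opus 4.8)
The plan is to prove the two inclusions $\Gamma\cdot(e,\hat\delta(e),\delta_e)\subseteq\Delta(e)$ and $\Delta(e)\subseteq\Gamma\cdot(e,\hat\delta(e),\delta_e)$ separately, exploiting that $\Delta(e)$ is parametrised by the star maps in $\Gamma(\partial_0 e,-)$ while the $\Gamma$-action merely left-composes the parametrising star map.

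First I would set up the bookkeeping. If $\delta\in\Gamma(\partial_0 e,-)$ then $\hat\delta$ is a bijection $\text{star}(\partial_0 e)\to\text{star}(t(\delta))$, so $\partial_0\hat\delta(e)=t(\delta)$, and hence $\varepsilon(e,\hat\delta(e),\delta_e)=t(\delta)$. Consequently the elements $\gamma\in\Gamma$ that may act on $(e,\hat\delta(e),\delta_e)$ are exactly those with $i(\gamma)=t(\delta)$, i.e.\ exactly those for which the composition $\gamma\delta$ is defined in the groupoid $\Gamma$; for such $\gamma$ we have $\gamma\cdot(e,\hat\delta(e),\delta_e)=(e,\widehat{\gamma\delta}(e),(\gamma\delta)_e)$ with $\gamma\delta\in\Gamma(\partial_0 e,-)$, so this triple lies in $\Delta(e)$ by definition. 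That gives the first inclusion.

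For the second inclusion I would take an arbitrary $(e,\hat{\delta'}(e),\delta'_e)\in\Delta(e)$, so $\delta'\in\Gamma(\partial_0 e,-)$. Since $\Gamma$ is a groupoid and $i(\delta)=\partial_0 e=i(\delta')$, the element $\gamma:=\delta'\delta^{-1}$ is a well-defined morphism of $\Gamma$ with $i(\gamma)=t(\delta)$ and $\gamma\delta=\delta'$. Hence $\gamma\cdot(e,\hat\delta(e),\delta_e)$ is defined and equals $(e,\widehat{\gamma\delta}(e),(\gamma\delta)_e)=(e,\hat{\delta'}(e),\delta'_e)$, exhibiting the latter in the orbit, and completing the proof.

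I do not expect any serious obstacle: at heart the argument is the observation that right-composition with $\delta^{-1}$ is a bijection $\Gamma(\partial_0 e,-)\to\Gamma(t(\delta),-)$ which identifies the orbit map $\gamma\mapsto\gamma\cdot(e,\hat\delta(e),\delta_e)$ with the tautological surjection $\delta'\mapsto(e,\hat{\delta'}(e),\delta'_e)$ onto $\Delta(e)$. The only points requiring a little care are keeping the composition order straight (so that it is $\gamma\delta$, not $\delta\gamma$, that must be defined) and recording the identity $\varepsilon(e,\hat\delta(e),\delta_e)=t(\delta)$, which is exactly what makes $\gamma=\delta'\delta^{-1}$ an admissible element for the action.
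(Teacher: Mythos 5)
Your proof is correct and is essentially the paper's argument: the forward inclusion is immediate from the definition of the action, and the reverse inclusion is obtained by acting with $\delta'\delta^{-1}$ (the paper writes the same element as $\gamma\delta^{-1}$), using that $\varepsilon(e,\hat{\delta}(e),\delta_e)=\partial_0\hat{\delta}(e)=t(\delta)$ so that this element is admissible. The extra bookkeeping you supply is accurate but not a different method.
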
\\
\begin{claimproof}
The inclusion $\subset$ is clear from the definitions. The inclusion $\supset$ is also easy, because for $(e,\hat{s}(e),s_e)\in A(e)$ we have $s a^{-1}\cdot(e,\hat{a}(e),a_e)=	(e,\hat{s}(e),s_e)$.
\end{claimproof}
\bigskip

\item We can now construct our common finite cover $X$ of $X^1$ and $X^2$. The underlying graph $G:=G_X$ will have vertex set given by
\begin{equation*}
V(G):=\left\{(s,l)\mid s\in\calS(u_1,u_2),\,u_1\in V(G_1),\,u_2\in V(G_2),\,1\leq l\leq\frac{N}{|\calS(u_1,-)|}\right\},
\end{equation*}
and edge set given by
\begin{equation*}
E(G):=\left\{(e_1,e_2,m,k)\mid e_1\in E(G_1),\,e_2\in E(G_2),\,(e_1,e_2,m)\in A,\,1\leq k\leq\frac{N}{|A(e_1)|}\right\},
\end{equation*}
where $N$ is a fixed positive integer that is a common multiple of all the integers $|\calS(u_1,-)|$ and $|A(e_1)|$.\par
$A$ admits an involution $(e,\hat{a}(e),a_e)\mapsto(\overline{e},\overline{\hat{a}(e)},a_e)$, as in Step 2, which induces bijections $A(e)\to A(\overline{e})$. Hence we can define edge inversion in $G$ by $\overline{(e_1,e_2,m,k)}:=(\overline{e}_1,\overline{e}_2,m,k)$ (note that $a_{\overline{e}}=a_e$). \par 
\bigskip
Vertex and edge objects in $X$ will be given by
\begin{align*}
X_{(s,l)}:=X^1_{i(s)},&&X_{(e_1,e_2,m,k)}:=X^1_{e_1}.
\end{align*}

To complete the construction of $G$, we must define the map $\partial_0:E(G)\to V(G)$, and the edge morphisms in $X$. Fix $e_1\in E(G_1)$, $e_2\in E(G_2)$ and $(e_1,e_2,m)\in A$, and say $\partial_0e_q=u_q$ ($q=1,2$). We would like each edge $(e_1,e_2,m,k)\in E(G)$ to satisfy
\begin{align}\label{Wedgemap}
\partial_0(e_1,e_2,m,k)&=(s,l)
\end{align}
for some $s\in\calS(u_1,u_2)$ such that $\hat{s}(e_1)=e_2$ and $s_{e_1}=m$. Note this is equivalent to $s\cdot(e_1,e_1,1_{e_1})=(e_1,e_2,m)$, where $1_{e_1}$ is the identity morphism $X^1_{e_1}\to X^1_{e_1}$. We arrange this by choosing an arbitrary matching between such vertices $(s,l)$ and integers $1\leq k\leq N/|A(e_1)|$ - to verify that this is valid, we must check that we have equal numbers of each. Indeed, Lemma \ref{orbstab} tells us that
\begin{equation*}
|\{s\in\calS(u_1,-)\mid s\cdot(e_1,e_1,1_{e_1})=(e_1,e_2,m)\}|=|\text{Stab}_\calS(e_1,e_1,1_{e_1})|,
\end{equation*}  
and so
\begin{align*}
\left|\Set{(s,l)\,|\,\begin{aligned}s\in\calS(u_1,-),\,s\cdot(e_1,e_1,1_{e_1})=(e_1,e_2,m),\\1\leq l\leq\frac{N}{|\calS(u_1,-)|}\end{aligned}}\right|&=|\text{Stab}_\calS(e_1,e_1,1_{e_1})|\frac{N}{|\calS(u_1,-)|}\\
&=\frac{N}{|\calS\cdot(e_1,e_1,1_{e_1})|}\\
&=\frac{N}{|A(e_1)|}.
\end{align*}
The second equality is by Lemma \ref{orbstab} while the third equality follows from Step 2.

Finally, we define the edge morphisms in $X$ using the edge morphisms in $X^1$:
\begin{equation*}
\phi^{(e_1,e_2,m,k)}_0:=\phi^{e_1}_0:X^1_{e_1}\to X^1_{u_1}.
\end{equation*}
\bigskip

\item In this last step, we define coverings from $X$ down to $X^1$ and $X^2$.
\begin{equation*}
\begin{tikzcd}[
ar symbol/.style = {draw=none,"#1" description,sloped},
isomorphic/.style = {ar symbol={\cong}},
equals/.style = {ar symbol={=}},
subset/.style = {ar symbol={\subset}}
]
&X\ar{dl}[swap]{\mu^1}\ar{dr}{\mu^2}\\
X^1&&X^2.
\end{tikzcd}
\end{equation*}

 We define the maps $\hat{\mu}^1$ and $\hat{\mu}^2$ by
\begin{align*}
\hat{\mu}^1(s,l):=i(s),&&\hat{\mu}^1(e_1,e_2,m,k)&:=e_1,\\
\hat{\mu}^2(s,l):=t(s),&&\hat{\mu}^2(e_1,e_2,m,k)&:=e_2.
\end{align*}
These clearly preserve edge inversion, and $\partial_0(e_1,e_2,m,k)=(s,l)$ as in (\ref{Wedgemap}) implies
\begin{align*}
\partial_0\hat{\mu}^1(e_1,e_2,m,k)&=\partial_0e_1&\partial_0\hat{\mu}^2(e_1,e_2,m,k)&=\partial_0e_2\\
&=u_1&&=u_2\\
&=i(s)&&=t(s)\\
&=\hat{\mu}^1(s,l)&&=\hat{\mu}^2(s,l)\\
&=\hat{\mu}^1\partial_0(e_1,e_2,m,k),&&=\hat{\mu}^2\partial_0(e_1,e_2,m,k),
\end{align*}
thus $\hat{\mu}^1$ and $\hat{\mu}^2$ are well-defined graph morphisms.

We can then define $\mu^1$ and $\mu^2$ by the morphisms
\begin{align*}
\mu^1_{(s,l)}&:=1_{i(s)},&\mu^1_{(e_1,e_2,m,k)}&:=1_{e_1},\\
\mu^2_{(s,l)}&:=s_{i(s)},&\mu^2_{(e_1,e_2,m,k)}&:=m,
\end{align*}
where $1_{i(s)}$ is the identity morphism $X^1_{i(s)}\to X^1_{i(s)}$, and $s_{i(s)}$ is one of the possible vertex object morphisms associated to the star map $s$ (see part (b) in the definition of star map at the beginning of Step 1). Note that the above morphisms do go between the appropriate vertex and edge objects as specified by the maps $\hat{\mu}^1$ and $\hat{\mu}^2$.\par 
Again with $\partial_0(e_1,e_2,m,k)=(s,l)$ as in (\ref{Wedgemap}), we get the following commutative squares, demonstrating that $\mu^1$ and $\mu^2$ are well-defined morphisms of graphs of objects. The bottom left square commutes precisely because $s$ is a star map.
\begin{equation*}
\begin{tikzcd}[
ar symbol/.style = {draw=none,"#1" description,sloped},
isomorphic/.style = {ar symbol={\cong}},
equals/.style = {ar symbol={=}},
subset/.style = {ar symbol={\subset}}
]
X^1_{e_1}\ar{dd}{\phi^{e_1}_0}\ar[equals]{rr}&& X^1_{e_1}\ar{dd}{\phi^{e_1}_0}\\
\\
X^1_{u_1}\ar[equals]{rr}&&X^1_{u_1}
\end{tikzcd}
\hspace{2cm}
\text{\Huge$\Rightarrow$}
\hspace{2cm}
\begin{tikzcd}[
ar symbol/.style = {draw=none,"#1" description,sloped},
isomorphic/.style = {ar symbol={\cong}},
equals/.style = {ar symbol={=}},
subset/.style = {ar symbol={\subset}}
]
X_{(e_1,e_2,m,k)}\ar{dd}{\phi^{(e_1,e_2,m,k)}_0}\ar{rr}{\mu^1_{(e_1,e_2,m,k)}}&& X^1_{e_1}\ar{dd}{\phi^{e_1}_0}\\
\\
X_{(s,l)}\ar{rr}{\mu^1_{(s,l)}}&&X^1_{u_1}
\end{tikzcd}
\end{equation*}

\begin{equation*}
\begin{tikzcd}[
ar symbol/.style = {draw=none,"#1" description,sloped},
isomorphic/.style = {ar symbol={\cong}},
equals/.style = {ar symbol={=}},
subset/.style = {ar symbol={\subset}}
]
X^1_{e_1}\ar{dd}{\phi^{e_1}_0}\ar{rr}{m=s_{e_1}}&& X^2_{e_2}\ar{dd}{\phi^{e_2}_0}\\
\\
X^1_{u_1}\ar{rr}{s_{u_1}}&&X^2_{u_2}
\end{tikzcd}
\hspace{2cm}
\text{\Huge$\Rightarrow$}
\hspace{2cm}
\begin{tikzcd}[
ar symbol/.style = {draw=none,"#1" description,sloped},
isomorphic/.style = {ar symbol={\cong}},
equals/.style = {ar symbol={=}},
subset/.style = {ar symbol={\subset}}
]
X_{(e_1,e_2,m,k)}\ar{dd}{\phi^{(e_1,e_2,m,k)}_0}\ar{rr}{\mu^2_{(e_1,e_2,m,k)}}&& X^2_{e_2}\ar{dd}{\phi^{e_2}_0}\\
\\
X_{(s,l)}\ar{rr}{\mu^2_{(s,l)}}&&X^2_{u_2}
\end{tikzcd}
\end{equation*}

By construction, the star of a vertex $(s,l)$ in $G$, with $s\in\calS(u_1,u_2)$, takes the form
\begin{equation*}
\text{star}(s,l)=\{(e,\hat{s}(e),s_e,k_e)\mid e\in\text{star}(u_1)\}
\end{equation*}
where each $k_e$ is some integer associated to $e$. Now $\hat{\mu}^1(e,\hat{s}(e),s_e,k_e)=e$ and $\hat{\mu}^2(e,\hat{s}(e),s_e,k_e)=\hat{s}(e)$, so $\hat{\mu}^1$ and $\hat{\mu}^2$ induce bijections from $\text{star}(s,l)$ to $\text{star}(u_1)$ and $\text{star}(u_2)$ respectively. We conclude that $\hat{\mu}^1$ and $\hat{\mu}^2$ are graph coverings, which makes $\mu^1$ and $\mu^2$ coverings of graphs of objects.

\item Finally we must construct diagram (\ref{objdiagram}) with $g\in\mathscr{S}(H)$. We can assume that the graph $G$ is connected, as otherwise we just restrict to a component. The usual covering space theory of graphs allows us to draw the following commutative diagram of graph coverings. (More precisely, $\hat{\nu}^1$ is a lift of $\hat{f}^1$ with respect to $\hat{\mu}^1$, $\hat{\nu}^2$ is a lift of $\hat{f}^2$ with respect to $\hat{\mu}^2$, and $\hat{g}$ is a lift of $\hat{\nu}^1$ with respect to $\hat{\nu}^2$.)

\begin{equation}
\begin{tikzcd}[
ar symbol/.style = {draw=none,"#1" description,sloped},
isomorphic/.style = {ar symbol={\cong}},
equals/.style = {ar symbol={=}},
subset/.style = {ar symbol={\subset}}
]
T\ar{dd}[swap]{\hat{f}^1}\ar{rr}{\hat{g}}\ar{dr}[swap]{\hat{\nu}^1}&&T\ar{dl}{\hat{\nu}^2}\ar{dd}{\hat{f}^2}\\
&G\ar{dl}{\hat{\mu}^1}\ar{dr}[swap]{\hat{\mu}^2}\\
G_1&&G_2
\end{tikzcd}
\end{equation}

As in Remark \ref{remk:GaloisObjGraph}, there is then a unique way of upgrading $\hat{\nu}^1$, $\hat{\nu}^2$ and $\hat{g}$ to coverings of graphs of objects $\nu^1$, $\nu^2$ and $g$. So we now have the following commutative diagram of graphs of objects, with $g\in\Aut(\tilde{X})$.

\begin{equation}
\begin{tikzcd}[
ar symbol/.style = {draw=none,"#1" description,sloped},
isomorphic/.style = {ar symbol={\cong}},
equals/.style = {ar symbol={=}},
subset/.style = {ar symbol={\subset}}
]
\tilde{X}\ar{dd}[swap]{f^1}\ar{rr}{g}\ar{dr}[swap]{\nu^1}&&\tilde{X}\ar{dl}{\nu^2}\ar{dd}{f^2}\\
&X\ar{dl}{\mu^1}\ar{dr}[swap]{\mu^2}\\
X^1&&X^2
\end{tikzcd}
\end{equation}

It remains to prove that $g\in\mathscr{S}(H)$. Consider a star map $s\in\mathcal{S}$ with $i(s)=u_1\in V(G_1)$ and $t(s)=u_2\in V(G_2)$. Suppose that $s$ takes the form
$$s=(f^2)^{z_2}h^{z_1}((f^1)^{z_1})^{-1}$$
from (\ref{starmap2}), with $z_1,z_2\in V(T)$, $\hat{f}^i(z_i)=u_i$, $h\in H$, and $\hat{h}(z_1)=z_2$. We can then choose the morphism $s_{u_1}$ to fit into the following commutative diagram.

\begin{equation}
\begin{tikzcd}[
ar symbol/.style = {draw=none,"#1" description,sloped},
isomorphic/.style = {ar symbol={\cong}},
equals/.style = {ar symbol={=}},
subset/.style = {ar symbol={\subset}}
]
\tilde{X}_{z_1}\ar{d}{f^1_{z_1}}\ar{r}{h_{z_1}}&\tilde{X}_{z_2}\ar{d}{f^2_{z_2}}\\
X^1_{u_1}\ar{r}{s_{u_1}}&X^2_{u_2}
\end{tikzcd}
\end{equation}
If $X_{(s,l)}$ is a vertex object of $X$, and $v_1,v_2\in V(T)$ are such that $\hat{\nu}^i(v_i)=(s,l)$ (so $\hat{f}^i(v_i)=u_i$) and $\hat{g}(v_1)=v_2$, then we get a larger commutative diagram as follows.

\begin{equation}\label{bigdiagram}
\begin{tikzcd}[
ar symbol/.style = {draw=none,"#1" description,sloped},
isomorphic/.style = {ar symbol={\cong}},
equals/.style = {ar symbol={=}},
subset/.style = {ar symbol={\subset}}
]
\tilde{X}_{z_1}\ar[bend right=20]{dddr}[swap]{f^1_{z_1}}\ar{rrrr}{h_{z_1}}\ar{dr}[swap]{g^1_{z_1}}&&&&\tilde{X}_{z_2}\ar[bend left=20]{dddl}{f^2_{z_2}}\ar{dl}{g^2_{z_2}}\\
&\tilde{X}_{v_1}\ar{dd}[swap]{f^1_{v_1}}\ar{rr}{g_{v_1}}\ar{dr}[swap]{\nu^1_{v_1}}&&\tilde{X}_{v_2}\ar{dd}{f^2_{v_2}}\ar{dl}{\nu^2_{v_2}}\\
&&X_{(s,l)}\ar{dl}{\mu^1_{(s,l)}}\ar{dr}[swap]{\mu^2_{(s,l)}}\\
&X^1_{u_1}\ar{rr}[swap]{s_{u_1}}&&X^2_{u_2}
\end{tikzcd}
\end{equation}
Here $g^i\in\Gamma_i$ is the element of the deck transformation group with $\hat{g}^i(z_i)=v_i$. Since $\Gamma_1,\Gamma_2\leqslant H$, the top square of (\ref{bigdiagram}) implies that $g_{v_1}=h'_{v_1}$ for some $h'\in H$.

A very similar argument can be run for edge objects, and so we conclude that $g\in\mathscr{S}(H)$.
\end{enumerate}
\end{breakproof}

\begin{remk}\label{remk:basedg}
	Given $v\in V(T)$, we can choose the automorphism $g\in\Aut(\tilde{X})$ from Theorem \ref{thm:SymObjectLeighton} such that $\hat{g}(v)=v$ and $g_v=1_{\tilde{X}_v}$ (and similarly for $e\in E(T)$). This follows by examining Step 5 of the proof. Indeed, let $s\in\mathcal{S}$ be defined by $s=(f^2)^v((f^1)^v)^{-1}$, and when restricting to a component of $G$ at the beginning of Step 5 make sure that the vertex $(s,1)$ is included. Then choose the coverings $\hat{\nu}^1$, $\hat{\nu}^2$ and $\hat{g}$ so that $\hat{\nu}^i(v)=(s,1)$ and $\hat{g}(v)=v$. If $\hat{f}^i(v)=u_i$, then we can draw diagram (\ref{bigdiagram}) with $z_i=v_i=v$ and $g^i=h=1\in\Aut(\tilde{X})$. It follows that $g_v=1_{\tilde{X}_v}$ as required.
\end{remk}

\bigskip
\section{Symmetry-restricted version}\label{sec:SymVersion}

We now show how to deduce the Symmetry-restricted Leighton's Theorem from its graph of objects counterpart, Theorem \ref{thm:SymObjectLeighton2}.

\begin{thm}(Symmetry-restricted Leighton's Theorem)\\\label{thm:SymLeighton2}
	Let $T$ be a tree, and $H\leqslant\Aut(T)$, and let $\Gamma_1, \Gamma_2 \leqslant H$ be free uniform lattices in $\Aut(T)$.
	Then for all $R \in \mathbb{N}$ there exists $g \in \mathscr{S}_R(H)$ such that $\Gamma_1^g$ is commensurable to $\Gamma_2$ in $\Aut(T)$. 
\end{thm}
\begin{proof}
We will turn $T$ into a tree of objects $X$ (ie. $G_X=T$). This will be with respect to $(\mathcal{C},\mathcal{M}_1,\mathcal{M}_2)$, where $\calC$ is the category of pairs $(Y,U)$ for $Y$ a finite tree and $U\subset V(Y)$, and a morphism in $\calC$ from $(Y,U)$ to $(Y',U')$ is a tree embedding $Y\xhookrightarrow{} Y'$ such that $U'$ is contained in the image of $U$. A morphism is in $\calM_1$ if $Y\to Y'$ is an isomorphism and $U'$ equals the image of $U$, and all morphisms are in $\calM_2$. We then define the vertex objects for $X$ as based $R$-balls $X_v:=(B_R(v),\{v\})$ for $v\in V(T)$, and the edge objects as $X_e:=(N_{R-1}(e),\{\partial_0e,\partial_1e\})$ for $e\in E(T)$, where $N_{R-1}(e)$ is the $(R-1)$-neighbourhood of $e$. The morphisms $\phi_0^e:X_e\to X_{\partial_0e}$ are given by the inclusions $N_{R-1}(e)\xhookrightarrow{}B_R(\partial_0e)$.

For $g\in\Aut(T)$ let $g_v$ be the restriction of $g$ to $B_R(v)$ (as in Definition \ref{defn:SymClosure}) and let $g_e$ be the restriction of $g$ to $N_{R-1}(e)$. We then have a homomorphism $\psi:\Aut(T)\to\Aut(X)$ defined by $\widehat{\psi(g)}:=g$ and
\begin{align*}
\psi(g)_v&:=g_v:B_R(v)\to B_R(gv),\\
\psi(g)_e&:=g_e:N_{R-1}(e)\to N_{R-1}(ge).
\end{align*}

It is easy to check that $\psi$ is a well-defined homomorphism. The key point is that $\psi$ is actually an isomorphism, as we will now show.\\

\begin{claim}
	$\psi$ is an isomorphism.
\end{claim}\\
\begin{claimproof}
	The homomorphism $\psi$ admits a retraction $r:\Aut(X)\to\Aut(T)$ given by $g\mapsto\hat{g}$, so we must show that $r$ has trivial kernel. 
	
	Consider $g\in\ker(r)$ and pick $v\in V(T)$. 		
	Take a vertex $u\in B_R(v)$. We will show that $g_v(u)=u$ by induction on the distance $d(u,v)$. We know that $g_v(v)=v$ by definition of morphisms in $\calC$, so we may assume that $u\neq v$. Suppose that the segment $[v,u]$ in $T$ starts with the edge $e$ such that $\partial_0e=v$ and $\partial_1e=w$. As $g$ is a morphism of graphs of objects, we have the following commutative diagram.

	\begin{equation}\label{gvgw}
	\begin{tikzcd}[
	ar symbol/.style = {draw=none,"#1" description,sloped},
	isomorphic/.style = {ar symbol={\cong}},
	equals/.style = {ar symbol={=}},
	subset/.style = {ar symbol={\subset}}
	]
	B_R(v)\ar{d}{g_v}&N_{R-1}(e)\ar[hook',l]\ar[hook,r]\ar{d}{g_e}&B_R(w)\ar{d}{g_w}\\
	B_R(v)&N_{R-1}(e)\ar[hook',l]\ar[hook,r]&B_R(w)
	\end{tikzcd}
	\end{equation}
	We know that $d(u,w)<d(u,v)$, so $u\in B_R(w)$, and by induction we have $g_w(u)=u$. Diagram (\ref{gvgw}) then implies that $g_v$ also fixes $u$. This holds for all $u\in B_R(v)$, so $g_v$ is the identity map on $B_R(v)$, and diagram (\ref{gvgw}) implies that $g_e$ is the identity map on $N_{R-1}(e)$. Therefore $g$ is the identity on all edge and vertex objects, so $g=1\in\Aut(X)$ as required.
\end{claimproof}\\

As a consequence of the claim we know that $\hat{g}_v=g_v$ and $\hat{g}_e=g_e$ for any $g\in\Aut(X)$, $v\in V(T)$ and $e\in E(T)$. It follows easily that, for $H\leqslant\Aut(T)$, the $R$-symmetry-restricted closure from Definition \ref{defn:SymClosure} corresponds to the symmetry-restricted closure from Definition \ref{defn:ObSymClosure}:
$$\psi(\mathscr{S}_R(H))=\mathscr{S}(\psi(H))$$
We are then done by Theorem \ref{thm:SymObjectLeighton2}.
\end{proof}

\begin{remk}
	It follows from Remark \ref{remk:basedg} that given $v\in V(T)$ we can choose the conjugating automorphism $g\in\Aut(T)$ to restrict to the identity on the ball $B_R(v)$.
\end{remk}

Recall from Example \ref{exmp:graphoftriangles} that a \emph{graph of polygons} is a space consisting of solid regular polygons with some edges joining vertices of the polygons. And recall that a \emph{covering} of graphs of polygons is a topological covering that restricts to isometries between polygons. We now give two proofs of why Leighton's Theorem holds for graphs of polygons, the first proof views graphs of polygons as graphs of objects while the second proof uses the Symmetry-restricted Leighton's Theorem.

\begin{prop}\label{prop:polygonLeighton}
	Leighton's Theorem holds for graphs of polygons: if two finite graphs of polygons (ie. with finite underlying graphs) are covered by the same tree of polygons, then they have a common finite cover.
\end{prop}
\begin{proof}[Proof 1.]
	Graphs of polygons are graphs of objects with respect to
	 $$(\calC,\calM_1,\calM_2)=(\text{metric spaces, isometries, isometric embeddings}).$$
	  The vertex objects are polygons and the edge objects are points. The proposition then follows from Theorem \ref{thm:BasicObjectLeighton}.
\end{proof}
\begin{proof}[Proof 2.]
	Let $X$ be a graph of polygons. Each polygon contains a \emph{polygon star} consisting of the centre of the polygon and edges joining the centre to each vertex. Let $X^*$ be the graph obtained from $X$ by retracting each polygon to its polygon star. This induces an $\Aut(X)$-invariant retraction $X\to X^*$, which in turn induces an injective homomorphism $\Aut(X)\to\Aut(X^*)$. An example is illustrated in Figure \ref{fig:XtoX*}.\\
	
\begin{figure}[H]
	\centering
	\scalebox{0.7}{
		\begin{tikzpicture}[node distance = 4cm,
		scale=3,
		triangle/.style = {fill=red, draw=red, regular polygon, regular polygon sides=3, minimum size=40pt},
		square/.style = {fill=red, draw=red, regular polygon, regular polygon sides=4, minimum size=40pt},
		pentagon/.style = {fill=red, draw=red, regular polygon, regular polygon sides=5, minimum size=40pt},
		every loop/.style={min distance=2cm},
		hull/.style={draw=none}
		]
		
		\tikzstyle{label}=[draw=none,font=\Huge]
		
		\begin{scope}[shift={(-2,0)}]
		\node[hull] (X) at (2.5,-0.2) {};
		\node[label] (XL) at (2.5,-1) {$X$};
		\node[triangle] (1) {};
		\node[pentagon] (2) at (2,0) {};
		\node[square] (3) at (1,-1) {};
		
		\path
		(1.corner 3) edge (3.corner 2)
		(1.corner 2) edge [bend right]  (3.corner 2)
		(1.corner 1) edge (2.corner 1)
		(2.corner 3) edge [loop below] (2.corner 3)
		;
		\end{scope}
		
		\begin{scope}[shift={(2,0)}]
		\node[hull] (X*) at (-0.4,-0.2) {};
		\node[label] (X*L) at (2.5,-1) {$X^*$};
		\node[triangle, fill=none,draw=none] (1) {};
		\node[pentagon, fill=none,draw=none] (2) at (2,0) {};
		\node[square, fill=none,draw=none] (3) at (1,-1) {};
		
		\draw[red] (1.center)--(1.corner 1);
		\draw[red] (1.center)--(1.corner 2);
		\draw[red] (1.center)--(1.corner 3);
		\draw[red] (2.center)--(2.corner 1);
		\draw[red] (2.center)--(2.corner 2);
		\draw[red] (2.center)--(2.corner 3);
		\draw[red] (2.center)--(2.corner 4);
		\draw[red] (2.center)--(2.corner 5);
		\draw[red] (3.center)--(3.corner 1);
		\draw[red] (3.center)--(3.corner 2);
		\draw[red] (3.center)--(3.corner 3);
		\draw[red] (3.center)--(3.corner 4);
		
		\path
		(1.corner 3) edge (3.corner 2)
		(1.corner 2) edge [bend right]  (3.corner 2)
		(1.corner 1) edge (2.corner 1)
		(2.corner 3) edge [loop below] (2.corner 3)
		;
		\end{scope}
		
		\draw[draw=black,fill=blue,-triangle 90, ultra thick] (X) -- (X*);
		\end{tikzpicture}
	}
	\caption{The retraction $X\to X^*$.}\label{fig:XtoX*}
\end{figure}
	
	If two finite graphs of polygons $X_1$ and $X_2$ are covered by a tree of polygons $T$, with covering maps $p_i:T\to X_i$, then standard covering space theory tells us that any common finite cover $\hat{X}$ fits into a commutative diagram of covering maps as follows, where $g$ is an automorphism of the tree of polygons $T$.
	
	\begin{equation}\label{xhatdiagram2}
	\begin{tikzcd}[
	ar symbol/.style = {draw=none,"#1" description,sloped},
	isomorphic/.style = {ar symbol={\cong}},
	equals/.style = {ar symbol={=}},
	subset/.style = {ar symbol={\subset}}
	]
	T\ar{dd}[swap]{p_1}\ar{rr}{g}\ar{dr}[swap]{\nu_1}&&T\ar{dl}{\nu_2}\ar{dd}{p_2}\\
	&\hat{X}\ar{dl}{\mu_1}\ar{dr}[swap]{\mu_2}\\
	X_1&&X_2
	\end{tikzcd}
	\end{equation}
	Let the deck transformation group of $p_i:T\to X_i$ be denoted $\Gamma_i\leqslant\Aut(T)$. As described for graphs in the introduction, the existence of $\hat{X}$ is equivalent to the existence of $g\in\Aut(T)$ such that $\Gamma_1^g$ is commensurable to $\Gamma_2$, so let's now prove the latter.
	
	As explained above, we have a retraction $T\to T^*$ that induces an injective homomorphism $\Aut(T)\to\Aut(T^*)$. Let $H\leqslant\Aut(T^*)$ be the image. The edges in each polygon star have a cyclic ordering given by the cyclic ordering on the vertices of the polygon, and an automorphism of $T^*$ extends to an automorphism of $T$ if and only if it maps polygon stars to polygon stars in a way that preserves the cyclic orderings. In the notation of Definition \ref{defn:SymClosure}, this means that $H=\mathscr{S}_1(H)$. We may then apply Theorem \ref{thm:SymLeighton2} to obtain $g\in H$ that conjugates the image of $\Gamma_1$ in $H$ onto a subgroup commensurable with the image of $\Gamma_2$. Pulling this back to $\Aut(T)$ gives $g\in\Aut(T)$ such that $\Gamma_1^g$ is commensurable to $\Gamma_2$, as required.
\end{proof}
\bigskip

To close this section, we give an example showing that one cannot demand that the conjugating element $g$ lie in the subgroup $H$ in the Symmetry-restricted Leighton's Theorem (see also Remark \ref{notHbar}).
\begin{exmp}

	Consider a tree $T$ with directed red and blue edges as shown in Figure \ref{fig:xi}. Let $G_1$ be the rose on two petals, let $p_1:T\to G_1$ be the standard covering defined by the edge colouring of $T$, and let $\Gamma_1$ be the corresponding group of deck transformations. Let $\xi\in$ Aut$(T)$ reflect the right side of $T$ while fixing the left side, as shown above. Suppose $p_2:T\to G_2$ is the covering with deck transformation group $\Gamma_2=\xi^{-1}\Gamma_1 \xi$ (so $G_2$ is also the rose on two petals). Let $H$ be the subgroup generated by $\Gamma_1$ and $\Gamma_2$. We will see that no $h\in H$ makes $\Gamma_1^h$ and $\Gamma_2$ commensurable.
	
	Let $H^+:=\langle\Gamma_1,\,\xi\rangle$, and note that $H\leqslant H^+$.
	Firstly we find some invariant for elements of $H^+$ that differentiates $\xi$ from $H$. Note that all automorphisms in $H^+$ map blue edges to blue edges, preserving the orientations given by the arrows, and they also map red edges to red edges - but in this case they might flip the orientation. We will say that a blue edge $e$ is \textit{twisted} by an automorphism $h\in H^+$ if the red edges incident at one end of $e$ have orientation preserved by $h$, but the red edges incident at the other end have orientation flipped by $h$. The invariant of study will be the set of twisted edges, denoted $\tau(h)$. For instance, $\tau(g)=\emptyset$ for $g\in\Gamma_1$, and $\tau(\xi)=\{e,\overline{e}\}$ where $e$ is as indicated in the picture. Edges in $\tau(h)$ will always come in $\{e,\overline{e}\}$ pairs, so we will think of $\tau(h)$ as a set of geometric edges.
	
		\begin{figure}[H]
		\definecolor{RED}{HTML}{FF0000}
		\definecolor{BLUE}{HTML}{0000FF}
		\definecolor{ORANGE}{HTML}{FF6600}
		\definecolor{PURPLE}{HTML}{800080}
		\centering
		\begin{overpic}[width=.7\textwidth,clip=true]{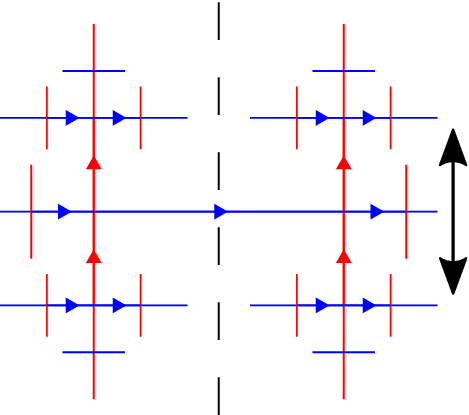}
			\put(40,45){\Huge\textcolor{BLUE}{$e$}}
			\put(101,40){\Huge$\xi$}
			\put(-5,75){\Huge$T$}	
		\end{overpic}
		\caption{Tree $T$ and automorphism $\xi$.}\label{fig:xi}
	\end{figure}
	
	Twisting twice about an edge results in an untwisted edge, so the set of twisted edges of a composition can be expressed as the following symmetric difference.
	\begin{equation}\label{taucomp}
	\tau(h_1h_2)=h_2^{-1}(\tau(h_1))\triangle\tau(h_2)
	\end{equation} 
	It immediately follows that $\tau(h)$ is finite for all $h\in H^+$, and that $\tau(h)$ contains an even number of geometric edges if and only if $h$ contains an even number of $\xi^\pm$ terms when expressed as a product of $\Gamma_1$ terms and $\xi^\pm$ terms. In particular, $\tau(h)$ contains an even number of geometric edges if $h\in H$.
	
	Now suppose for contradiction that $h\in H$ makes $\Gamma_1^h$ and $\Gamma_2$ commensurable. We know that $\tau(h)$ is finite, so suppose it spans a subtree of $T$ with diameter $M$. By our commensurability assumption, there exists $g\in\Gamma_1$ with translation length greater than $2M$ such that $h^{-1} \xi^{-1}g\xi h\in\Gamma_1$. By (\ref{taucomp}), $\tau(\xi^{-1}g\xi)=\{e,\overline{e},g^{-1}(e),g^{-1}(\overline{e})\}$ contains precisely two geometric edges, and these are at least $2M$ apart. Now $\tau(h)$ consists of an even number of geometric edges, pairwise less than $M$ apart, so again using (\ref{taucomp}) we deduce that $\tau(\xi^{-1}g\xi h)$ contains a pair of geometric edges separated by at least $M$ (it may contain other edges). A final application of (\ref{taucomp}) reveals that $\tau(h^{-1} \xi^{-1}g\xi h)$ is non-empty, contradicting $h^{-1} \xi^{-1}g\xi h\in\Gamma_1$.
\end{exmp}

\bigskip
\section{Bounds on sizes of covers}\label{sec:Bounds}

Leighton's Theorem assures the existence of a common finite cover but gives no bound on how large this cover might be. In Leighton's original paper \cite{Leighton} there is a short remark at the end saying that, given finite graphs $G_1$ and $G_2$, one can easily calculate an upper bound for the size of the common finite cover constructed - but this bound is not explicitly in terms of the number of edges and vertices in $G_1$ and $G_2$. In this section we obtain an explicit upper bound for the size of the finite cover constructed in Leighton's proof (one could alternatively obtain a bound from our new proof of Leighton's Theorem, but such a bound turns out to be larger); we also find an upper bound for the finite cover in Theorem \ref{thm:SymObjectLeighton} and a bound for the index of commensurability in Theorem \ref{thm:SymLeighton}. We make no claim that these bounds are anywhere close to being sharp.\par 
A key tool in finding bounds will be \textit{Landau's function} $g(n)$, which is the greatest order of an element of the symmetric group on $n$ elements. Equivalently, this is the greatest possible value for the lowest common multiple of positive integers $n_1,...,n_k$ that sum to $n$. \cite{Landau} gives the following explicit bound.
\begin{equation}\label{Landau}
g(n)\leq\exp(1.05313\sqrt{n\log{n}})
\end{equation}
As the other bounds we use will be quite rough, we will use the neater but less accurate bound $\exp(2\sqrt{n\log{n}})$ in place of (\ref{Landau}) - the important thing is that it's sub-exponential. The bounded version of Leighton's Theorem is as follows - note the notation we use here is consistent with \cite[Theorem 1.1]{Neumann} rather than with the rest of our paper.
\bigskip

\begin{thm}(Bounded Leighton's theorem)\label{boundL}\\
Let $G$ and $G'$ be finite connected graphs. Set $E:=\tfrac{1}{2}|E(G)|$ and $V':=|V(G')|$. Then $G$ and $G'$ have a common cover $H$ with the following bound on its vertex set:
\begin{equation*}
|V(H)|\leq 2V'\exp(2\sqrt{E\log{E}}).
\end{equation*}
\end{thm}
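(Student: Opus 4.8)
The plan is to follow Leighton's original construction rather than the groupoid proof of Theorem \ref{Leighton}, since the remark in Section 6 promises a sharper bound from that route. Recall the shape of Leighton's argument: from the common cover $T$ one extracts, for each vertex $v$ of $G$, a finite set of ``coloured stars'' recording how the stars of lifts of $v$ map down to $G$ and to $G'$; one then builds a bipartite-type incidence structure whose vertex set is a union of many copies of these stars, matched along edges. The number of copies one must take to make the matchings symmetric is governed by least common multiples of the sizes of the relevant orbits, and this is exactly where Landau's function $g(n)$ enters: the orbit sizes in question are sizes of pieces of $\mathrm{star}(v)$, so they are at most $\deg(v)$, and the relevant lcm of a partition of the edges at $v$ is bounded by $g(\deg v)$. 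Taking a product (or a single lcm) over all vertices of $G$ produces the factor $N$, and the total vertex count of $H$ is at most $V'$ times $N$ times a small combinatorial constant.

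First I would set up notation carefully: fix the common cover $T \to G$ and $T \to G'$, and for a vertex $v$ of $G$ and a vertex $v'$ of $G'$ record the collection of ``$(p_1,p_2)$-types'' of lifted stars, i.e. pairs of bijections $\mathrm{star}(v) \to \mathrm{star}(\text{lift}) \to \mathrm{star}(v')$ modulo the deck actions. Second, I would carry out Leighton's piecing-together construction, being explicit about how many duplicates of each star-type are used: if a given star at $v$ must be glued along its edges to stars on the other side according to orbits of sizes $n_1, \dots, n_k$ with $\sum n_i \le \deg(v)$, then one needs a common multiple, and choosing the least common multiple gives a local contribution bounded by $g(\deg v)$. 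Third, I would aggregate: the global $N$ can be taken to be $\mathrm{lcm}$ of all these local lcm's; crucially, instead of bounding this by $\prod_v g(\deg v)$ one should observe that a common multiple of lcm's over all vertices is still a common multiple of a single partition of $E(G)$ into at most $|E(G)|$ parts — so $N \le g(|E(G)|)$, or more precisely, because each geometric edge contributes two darts, one gets a bound in terms of $E = \tfrac12 |E(G)|$. Finally, counting vertices of $H$: each vertex of $H$ lies over a vertex of $G'$, and over each vertex of $G'$ there are at most (number of star-types) $\times N$ vertices; bounding the number of star-types crudely by something absorbed into the constant $2$, and using $N \le \exp(2\sqrt{E\log E})$ from (\ref{Landau}) and the remark replacing Landau's constant, yields $|V(H)| \le 2V' \exp(2\sqrt{E \log E})$.

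The main obstacle I anticipate is the bookkeeping in the third step: making the reduction from ``$\mathrm{lcm}$ over vertices of the local lcm's'' down to ``$g$ evaluated at $E$'' rather than the much weaker $\prod_v g(\deg v)$. The point is that one does not need a separate duplication factor per vertex — one needs a single integer $N$ that is simultaneously a common multiple of all the orbit sizes appearing anywhere in the gluing, and all these orbit sizes are cardinalities of disjoint subsets of the dart set $E(G)$ (the edges at a vertex partition according to their images, and distinct vertices have disjoint dart-stars), so the whole collection of orbit sizes across the entire construction refines into a single partition of $E(G)$. Hence their lcm is at most $g(|E(G)|)$, and the factor of $\tfrac12$ turning $|E(G)|$ into $E$ comes from pairing $e$ with $\overline e$ and noting the orbit of $e$ determines that of $\overline e$, as in the Claim in Step 2 of the proof of Theorem \ref{Leighton}. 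A secondary, more routine obstacle is verifying that the number of distinct coloured-star-types over a given vertex of $G'$, together with any leftover small multiplicative constants from Leighton's construction, really does fit inside the clean constant $2$; this is where one uses that $H$ can be taken connected (pass to a component) and that one is free to bound $g$ by the rounded estimate $\exp(2\sqrt{n\log n})$, which has enough slack to absorb polynomial-in-$E$ or polynomial-in-$V'$ fudge factors.
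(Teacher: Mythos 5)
Your overall strategy --- run Leighton's original (equivalently, Neumann's) construction and control the duplication factor by applying Landau's function to a partition of $E(G)$, with the edge-inversion pairing supplying the factor $2$ --- is the same as the paper's. But there is a genuine gap in your identification of which integers the duplication factor $N$ must be divisible by. You take $N$ to be a common multiple of the sizes of the orbit-classes \emph{within each individual star}, i.e.\ of the numbers $d_k$ (the number of colour-$k$ edges at a single vertex of the appropriate colour), and you observe that these classes, taken over all vertices, partition the dart set $E(G)$. The construction, however, requires the integer $s$ to be a common multiple of the \emph{global} colour-class sizes $m_k=n_{i(k)}d_k$ (the total number of colour-$k$ edges in $G$, where $i(k)$ is the colour of the initial vertex of a colour-$k$ edge): in Neumann's indexing the vertices of $H$ over a colour-$i$ vertex of $G'$ are parametrised by pairs $(v,\alpha)$ with $\alpha$ ranging over a set $A_i$ of size $s/n_i$, and the colour-$k$ edges of $H$ by triples involving a set of size $s/m_k$, so one needs $n_i\mid s$ and $m_k\mid s$, not merely $d_k\mid s$. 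These are strictly stronger conditions. A concrete failure: for $G=C_3$ and $G'=C_4$ (both covered by the line), every local orbit has size $1$ or $2$, so your $N\le 2$, yet any common finite cover has at least $12$ vertices; a gluing that uses only two copies of each star cannot close up into a cover.

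The repair is cheap and leaves your final bound intact: the global colour classes are themselves a partition of $E(G)$ (namely the coarsening of the union of your per-star partitions obtained by merging classes of the same colour), so $\operatorname{lcm}_k m_k\le g(|E(G)|)$ by exactly the mechanism you describe, and the pairing $m_k=m_{\overline{k}}$ (with $m_k$ even when $k=\overline{k}$) gives $s\le 2g(E)$. One further accounting point: the number of vertices of $H$ over a fixed $v'\in V(G')$ of colour $i$ is exactly $n_i\cdot|A_i|=n_i\cdot(s/n_i)=s$, so $|V(H)|=V's$ on the nose --- there is no residual ``number of star types'' factor to absorb into the constant $2$, and you should not rely on slack in the estimate $g(n)\le\exp(2\sqrt{n\log n})$ to swallow such a factor (for small $E$ there is essentially none). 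The constant $2$ in the statement comes solely from the edge-inversion parity in the bound $s\le 2g(E)$.
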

\begin{proof}
	We use the proof of \cite[Theorem 1.1]{Neumann}, which is essentially the same as Leighton's original proof, just written more concisely. The common cover $H$ constructed in the proof has vertices indexed by tuples $(i,v,v',\alpha)$, where $v$ and $v'$ are vertices of colour $i$ in $G$ and $G'$ respectively, and $\alpha\in A_i$. There are $n_i$ (resp. $n'_i$) vertices in $G$ (resp. $G'$) of colour $i$, and $|A_i|=s/n_i$, so $|V(H)|=\sum_in_in'_i(s/n_i)=V's$. Now $s$ is a common multiple of the $m_k$, where $m_k$ is the number of edges coloured $k$ in $G$. Note that $\{e\in E(G)\mid e\text{ has colour }\overline{k}\}=\{e\in E(G)\mid \overline{e}\text{ has colour }k\}$, so $m_k=m_{\overline{k}}$ and $m_k$ is even if $k=\overline{k}$. This implies that $s\leq2g(E)\leq2\exp(2\sqrt{E\log{E}})$.
\end{proof}

\begin{remk}
	One can obtain a much stronger bound in the case that $G_1$ and $G_2$ are both regular, namely $|V(G)|\leq |V(G_1)||V(G_2)|$ if they are regular of even degree, and $|V(G)|\leq 2|V(G_1)||V(G_2)|$ if they are regular of odd degree.\par 
	 A classical theorem of Petersen says that any $2d$-regular graph has a \textit{2-factor} (the edges of a 2-regular subgraph that contains every vertex of the graph); so by induction one can partition the edge set into $d$ 2-factors, which gives us the data of a covering of $R_d$, the \textit{rose on $d$ petals} (graph with one vertex and $d$ geometric edges). See \cite[Corollary 2.1.5]{Diestel} for a proof of Petersen's Theorem - note that this proof is written for the setting of simplicial graphs, but the same proof works for our definition of graph (allowing multi-edges and loops). If $G_1$ and $G_2$ are both $2d$-regular, then we have covering maps $G_1\to R_d$ and $G_2\to R_d$, corresponding to subgroups $\pi_1 G_1, \pi_1 G_2\leqslant\pi_1 R_d$ of index $|V(G_1)|$ and $|V(G_2)|$ respectively. The subgroup $\pi_1 G_1\cap \pi_1 G_2\leqslant\pi_1 R_d$ then has index at most $|V(G_1)||V(G_2)|$, so corresponds to a common cover of $G_1$ and $G_2$ with at most $|V(G_1)||V(G_2)|$ vertices.\par 
	 If $G_1$ and $G_2$ are $d$-regular with $d$ odd, then instead of the rose we can use the graph $P_d$ consisting of two vertices and $d$ geometric edges joining them. $G_1$ might not be a cover of $P_d$ because it might have cycles of odd length, but this turns out to be the only obstruction. Indeed we can take a double cover $\hat{G}_1$ of $G_1$ that only contains even length cycles, hence is bipartite, and by Hall's Matching Theorem (or more specifically \cite[Corollary 2.1.3]{Diestel}) there exists a complete matching (or 1-factor) in $\hat{G}_1$; then by induction we can partition the edge set of $\hat{G}_1$ into $d$ complete matchings, and this is precisely the data of a covering of $P_d$. Similarly $G_2$ has a double cover $\hat{G}_2$ that covers $P_d$. Then we can take a cover $G$ of $P_d$ corresponding to the subgroup $\pi_1\hat{G}_1\cap\pi_1\hat{G}_2\leqslant\pi_1 P_d$ of index at most $|V(G_1)||V(G_2)|$, and this will be a common cover of $\hat{G}_1$ and $\hat{G}_2$, and hence also of $G_1$ and $G_2$, and $|V(G)|\leq 2|V(G_1)||V(G_2)|$ as desired.
	 
\end{remk}
\bigskip

The bounded version of Theorem \ref{thm:SymObjectLeighton} is the following.

\begin{thm}(Bounded Graph of Objects Leighton's Theorem)\\\label{thm:boundedobj}
	Let\begin{align*}
	f^1:\tilde{X}\to X^1\\
	f^2:\tilde{X}\to X^2
	\end{align*}
	be coverings of graphs of objects, with $G_1:=G_{X^1}$ and $G_2:=G_{X^2}$ both finite, and $T:=G_{\tilde{X}}$ a tree. Let $\Gamma_1$ and $\Gamma_2$ be the groups of deck transformations for $f^1$ and $f^2$, and suppose that $\Gamma_1,\Gamma_2\leqslant H\leqslant\Aut(\tilde{X})$.
	Suppose also that $H$ has finite edge isotropy groups. Then $X^1$ and $X^2$ have a common finite cover $X$, and there exists $g\in\mathscr{S}(H)$ that fits into the following commutative diagram of coverings.
	\begin{equation*}
	\begin{tikzcd}[
	ar symbol/.style = {draw=none,"#1" description,sloped},
	isomorphic/.style = {ar symbol={\cong}},
	equals/.style = {ar symbol={=}},
	subset/.style = {ar symbol={\subset}}
	]
	\tilde{X}\ar{dd}[swap]{f^1}\ar{rr}{g}\ar{dr}&&\tilde{X}\ar{dl}\ar{dd}{f^2}\\
	&X\ar{dl}{\mu^1}\ar{dr}[swap]{\mu^2}\\
	X^1&&X^2
	\end{tikzcd}
	\end{equation*}
	Furthermore, $X$ has underlying graph $G$ with vertex set bounded as follows.
	\begin{equation*}
	|V(G)|\leq (d!)^2(\LCM_{e\in E(T)}|H(e)|)^{2d}V^2\exp(2\sqrt{V\log{V}}),
	\end{equation*}
	where $V:=|V(G_1)\sqcup V(G_2)|$, $d$ is the maximum degree of vertices in $T$, and $\LCM$ denotes the lowest common multiple.\\ (As the isomorphism-type of the group $H(e)$ only depends on the $H$-orbit of $e$, and there are finitely many of these orbits, the lowest common multiple of the $|H(e)|$ is guaranteed to be finite.)
\end{thm}
\begin{proof}
	We follow the proof of Theorem \ref{thm:SymObjectLeighton}, and recall that the vertex set of $G$ is defined by
	\begin{equation*}
	V(G):=\left\{(s,l)\mid s\in\calS(u_1,u_2),\,u_1\in V(G_1),\,u_2\in V(G_2),\,1\leq l\leq\frac{N}{|\calS(u,-)|}\right\},
	\end{equation*}
	where $N$ is a common multiple of all the integers $|\calS(u,-)|$ and $|A(e)|$ for $u\in V(G_1)$ and $e\in E(G_1)$. Observe that by Lemma \ref{orbstab}, $|\calS(u,-)|=|\text{Stab}_\calS(e,e,1_e)||\calS\cdot (e,e,1_e)|$ if $\partial_0e=u$, and $\calS\cdot (e,e,1_e)=A(e)$ by Step 2 of the proof of Theorem \ref{thm:SymObjectLeighton}. So it suffices to make $N$ a common multiple of the integers $|\calS(u,-)|$. Also by Lemma \ref{orbstab}, we have that
	\begin{equation*}
	|\calS(u,-)|=|\calS(u,u)||\{v\mid\calS(u,v)\neq\emptyset\}|.
	\end{equation*}
	The sets $\{v\mid\calS(u,v)\neq\emptyset\}$ are the components of $\calS$, so partition $V(G_1)\sqcup V(G_2)$, and we can use (\ref{Landau}) to bound the lowest common multiple of their orders.\\
	\begin{claim}
$|\calS(u,u)|$ divides $d!(\text{LCM}_{e\in E(T)}|H(e)|)^{d}$.
	\end{claim}\\
\begin{claimproof}
The claim in step 1 of the proof of Theorem \ref{thm:SymObjectLeighton} says that there is a homomorphism $\theta:\calS(u,u)\to$ Aut(star$(u))$. The image of $\theta$ will have order dividing $d!$, and the claim goes on to show that each $s\in\ker\theta$ can be written as
\begin{equation*}
s=(f^1)^{z_0}h^{z_0}((f^1)^{z_0})^{-1}
\end{equation*}
for some $z_0$ a lift of $u$ to $T$, and $h\in H$ that fixes $\star*(z_0)$. We have $h_e\in H(e)$ for each $e\in$ star$(z_0)$, so we get an injective homomorphism $\ker\theta\to\prod_{e\in \text{star}(z_0)}H(e)$, which completes the proof of the claim.
\end{claimproof}\\
	
	Putting this together we get the following bound on $N$.
	\begin{equation}\label{Nbound3}
	N\leq d!(\text{LCM}_{e\in E(T)}|H(e)|)^{d}\exp(2\sqrt{V\log{V}}).
	\end{equation}
	It remains to count $s$ such that $s\in\calS(u_1,u_2)$ for $u_1\in V(G_1)$ and $u_2\in V(G_2)$. For fixed $u_1$ and $u_2$, $|\calS(u_1,u_2)|=|\calS(u_1,u_1)|$, and so the total count of these $s$ will be at most $d!(\text{LCM}_{e\in E(T)}|H(e)|)^{d}V^2$. Combining this with (\ref{Nbound3}) proves the theorem.
\end{proof}
\bigskip

Theorem \ref{thm:boundedobj} can be used to obtain a bounded version of the Symmetry-restricted Leighton's Theorem as follows.

\begin{thm}(Bounded Symmetry-restricted Leighton's Theorem)\\
	Let $T$ be a tree, and $H\leqslant\Aut(T)$, and let $\Gamma_1, \Gamma_2 \leqslant H$ be free uniform lattices in $\Aut(T)$.
	Then for all $R \in \mathbb{N}$ there exists $g \in \mathscr{S}_R(H)$ such that $\Gamma_1^g$ is commensurable to $\Gamma_2$ in $\Aut(T)$. Furthermore, we have the bound:
	$$|\Gamma_1^g:\Gamma_1^g\cap\Gamma_2|\leq(d!)^{4d^{R-1}}V^2\exp(2\sqrt{V\log{V}}),$$	
	where $V:=|V(T/\Gamma_1)\sqcup V(T/\Gamma_2)|$ and $d$ is the maximum degree of vertices in $T$.
\end{thm}
\begin{proof}
In the proof of Theorem \ref{thm:SymLeighton2} we converted $T$ into a tree of objects $X$ such that we have an isomorphism $\psi:\Aut(T)\to\Aut(X)$ with $\psi(\mathscr{S}_R(H))=\mathscr{S}(\psi(H))$. We can therefore apply Theorem \ref{thm:boundedobj} to $\psi(\Gamma_1),\psi(\Gamma_2)\leqslant\psi(H)\leqslant\Aut(X)$ to obtain $g\in\mathscr{S}(\psi(H))$ such that $\psi(\Gamma_1)^g$ is commensurable to $\psi(\Gamma_2)$, and the bound on $|V(G)|$ from the theorem also serves as an upper bound on the index $|\psi(\Gamma_1)^g:\psi(\Gamma_1)^g\cap\psi(\Gamma_2)|$. We then have $g':=\psi^{-1}(g)\in\mathscr{S}_R(H)$ an element that conjugates $\Gamma_1$ to become commensurable to $\Gamma_2$, and we have the bound:
\begin{equation}\label{objtosymbound}
|\Gamma_1^{g'}:\Gamma_1^{g'}\cap\Gamma_2|\leq|V(G)|\leq(d!)^2(\LCM_{e\in E(T)}|\psi(H)(e)|)^{2d}V^2\exp(2\sqrt{V\log{V}}).
\end{equation}

So it remains to estimate the orders of the isotropy groups $\psi(H)(e)$. From the proof of Theorem \ref{thm:SymLeighton2} we know that the edge object $X_e$ is the $(R-1)$-neighbourhood $N_{R-1}(e)$ of the edge $e$. And $\psi(H)(e)$ is a subgroup of $\Aut(N_{R-1}(e),e)$, the group of automorphisms of $N_{R-1}(e)$ that fix $e$. If we embed $N_{R-1}(e)$ in a $d$-regular tree $Y$, sending $e\mapsto e'$, then we get a (non-unique) injective homomorphism $\Aut(N_{R-1}(e),e)\xhookrightarrow{}\Aut(N_{R-1}(e'),e')$. Specifying an element of $\Aut(N_{R-1}(e'),e')$ is equivalent to specifying, for each $v\in N_{R-2}(e')$, a permutation of the $d-1$ edges leaving $v$ that point away from $e$. There are
$$2(1+(d-1)+(d-1)^2+...+(d-1)^{R-2})=\tfrac{2}{d}((d-1)^{R-1}-1)$$
such vertices $v$, so we have
\begin{align*}
|\Aut(N_{R-1}(e'),e')|&=((d-1)!)^{\tfrac{2}{d}((d-1)^{R-1}-1)}\\
&\leq (d!)^{\tfrac{2}{d}(d^{R-1}-1)}.
\end{align*}
We know that $\LCM_{e\in E(T)}|\psi(H)(e)|\leq |\Aut(N_{R-1}(e'),e')|$, and so we can plug this estimate into (\ref{objtosymbound}), which completes the proof.
\end{proof}

\bigskip
\begin{appendices}
	\section{Alternative proof of symmetry-restricted version}
	\bigskip
	\begin{center}
		{\large Giles Gardam and Daniel J. Woodhouse}
	\end{center}

\bigskip

Let $T$ be a locally finite, simplicial tree with all edges identified with $[0,1]$.
Let $G = \Aut(T)$, the full simplicial automorphism group of $T$.
We will assume that $G$ acts without edge inversions, which is possible by either subdividing $T$ (provided $T$ is not isometric to $\mathbb{R}$), or passing to an index $2$ subgroup.
Thus we can make each edge a directed edge such that these orientations are preserved by the $G$-action.
Let $H \leq G$ be a subgroup such that $H \acts T$ cocompactly.

If $S \subseteq T$ is a finite subset of the vertices then the \emph{set-wise stabilizer} is $H_S  = {\{ h \in H \mid h \cdot S = S \}},$ and the \emph{pointwise stabilizer} is ${H_{(S)} =  \{ h \in H \mid  \textrm{ $h \cdot s = s$ \; for all \; $s \in S$} \}}.$

\bigskip

Recall the Symmetry-restricted Leighton's Theorem:

\begin{thm} \label{thm:A}
	Let $\Gamma_1, \Gamma_2 \leqslant H$ be free uniform lattices in $G$.
	Then for all $R \in \mathbb{N}$ there exists $g \in \mathscr{S}_R(H)$ such that $\Gamma_1^g$ is commensurable to $\Gamma_2$ in $G$. 
\end{thm}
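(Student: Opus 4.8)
The plan is to recast the whole problem inside the closed subgroup $L := \mathscr{S}_R(H) \leqslant G$ and then argue with Haar measure on $L$. First I would record the relevant properties of $L$: it contains $H$, so $L \acts T$ cocompactly; being closed in the totally disconnected locally compact group $G = \Aut(T)$, it is itself totally disconnected locally compact; and, containing the uniform lattice $\Gamma_1$, it is unimodular. Since $L$ acts properly and cocompactly on $T$ and the $\Gamma_i$ act cocompactly, the $\Gamma_i$ are discrete and cocompact in $L$, i.e.\ free uniform lattices of $L$; being torsion-free discrete subgroups of $\Aut(T)$ they act freely on $T$, so $\Gamma_i \cap g U g^{-1} = 1$ for every $g \in L$ and every vertex- or ball-stabilizer $U$ of $L$. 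As any $g \in L$ is $R$-symmetry restricted by definition of $\mathscr{S}_R(H)$, Theorem~\ref{thm:A} reduces to the assertion that \emph{any two free uniform lattices $\Gamma_1, \Gamma_2$ of $L$ admit $g \in L$ with $\Gamma_1^g$ commensurable to $\Gamma_2$.}

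For this I would fix a base vertex $v_0 \in VT$, set $U := L_{(B_R(v_0))}$ (the pointwise stabilizer of the $R$-ball $B_R(v_0)$, a compact open subgroup of $L$), and normalize the Haar measure $\mu$ of $L$ so that $\mu(U) = 1$. Freeness of $\Gamma_i$ gives $\Gamma_i \cap g U g^{-1} = 1$ for all $g$, so every fibre of the map $\Gamma_i\backslash L \to \Gamma_i\backslash L/U$ has mass $\mu(U) = 1$ and hence $\mathrm{covol}(\Gamma_i) = \abs{\Gamma_i\backslash L/U} \in \N$; moreover the finite set $\Gamma_i\backslash L/U$ encodes how the $R$-balls of the quotient graph $\Gamma_i\backslash T$ sit over the finitely many $H$-orbits of $R$-balls in $T$. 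The heart of the proof is a measure-theoretic analogue of Leighton's star-by-star — here ball-by-ball — gluing: one works in the compact double coset space $\Gamma_1\backslash L/\Gamma_2$ with the finite measure induced by $\mu$ and, using that there are only finitely many local $R$-ball types, pigeonholes to find $g \in L$ for which $D := \Gamma_1^g \cap \Gamma_2$ has finite index in $\Gamma_2$; equivalently, one shows the right $\Gamma_2$-action on the finite-measure space $\Gamma_1\backslash L$ has a finite orbit (a point with cofinite $\Gamma_2$-stabilizer). Granting such a $g$, $D$ is a free uniform lattice of $L$ contained in the uniform lattice $\Gamma_1^g$, so from $\mathrm{covol}(D) = [\Gamma_1^g : D]\,\mathrm{covol}(\Gamma_1^g)$ with $\mathrm{covol}(\Gamma_1^g) = \mathrm{covol}(\Gamma_1) > 0$ (unimodularity) and $\mathrm{covol}(D) \leqslant [\Gamma_2 : D]\,\mathrm{covol}(\Gamma_2) < \infty$ we get $[\Gamma_1^g : D] < \infty$; thus $D$ has finite index in both $\Gamma_1^g$ and $\Gamma_2$, so $\Gamma_1^g$ and $\Gamma_2$ are commensurable, as required.

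I expect the pigeonhole/finite-orbit step to be the main obstacle: the difficulty is to pass from \emph{finite mass} of the double cosets to \emph{finite index} of $\Gamma_1^g \cap \Gamma_2$, i.e.\ to produce an honestly finite $\Gamma_2$-orbit rather than merely one of finite measure. This is exactly where the hypotheses bite: replacing $H$ by $\mathscr{S}_R(H)$ preserves cocompactness of the action (keeping all covolumes finite), while the restriction to $R$-balls guarantees that only finitely many local isomorphism types occur — precisely the finiteness that turns the averaging into a genuine combinatorial pigeonhole, playing the role of the finite groupoid $\Gamma$ and the finitely many $R$-ball classes in Sections~3 and~4. Secondary points needing care are the reduction of the non-vertex-transitive case to the vertex-transitive one — equivalently, summing the covolume contributions over the finitely many $L$-orbits of vertices — and verifying that conjugation by the eventual $g \in L$ preserves covolume, which is where unimodularity of $L$ is used again.
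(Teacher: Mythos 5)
There is a genuine gap, and it sits exactly where you flag it: the ``pigeonhole/finite-orbit step'' is not an obstacle to be smoothed over later, it \emph{is} the theorem, and your proposal contains no argument for it. Your reduction to the statement ``any two free uniform lattices of $L=\mathscr{S}_R(H)$ are commensurable after conjugation by some $g\in L$'' is sound, and your closing covolume computation (finite index on one side plus unimodularity gives finite index on the other) is standard and correct. But the mechanism you offer for the crucial step --- averaging over the finite-measure space $\Gamma_1\backslash L$ and invoking ``finitely many local $R$-ball types'' to pigeonhole a finite $\Gamma_2$-orbit --- cannot work as stated. A probability-measure-preserving action of a countable group need not have any finite orbit, so finiteness of mass does not yield finiteness of index; and the only special feature of $\mathscr{S}_R(H)$ you invoke, namely that $H$ has finitely many orbits of $R$-balls, also holds for $H=\operatorname{SL}_2(\mathbb{Q}_p)$ acting on its Bruhat--Tits tree, where (as noted in Remark \ref{notHbar}) there exist uniform lattices that are \emph{not} commensurable up to conjugacy in $H$. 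So an argument of the generality you describe would prove a false statement; whatever proof one gives must exploit that one is allowed to conjugate by elements of the strictly larger group $\mathscr{S}_R(H)$, which only locally agree with $H$, and your framework never uses this.

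The paper's proof supplies precisely the missing construction, and it is worth seeing how differently the Haar measure enters. Rather than searching for $g$ inside $\Gamma_1\backslash L/\Gamma_2$, one builds the common finite cover $\hat X$ of $X_1=T/\Gamma_1$ and $X_2=T/\Gamma_2$ by hand out of $H$-admissible $R$-polyhedral pairs, assigning to each pair $\mathbf{P}$ the weight $\omega(\mathbf{P})=\mu\bigl(H_{(\widetilde P)}\bigr)$, the Haar measure of the \emph{pointwise stabilizer} of a lift. The coset count of Lemma \ref{lem:cosetCorrespondance} shows both sides of each face's gluing equation equal $\mu\bigl(H_{(\widetilde F)}\bigr)$ (Lemma \ref{lem:basic}), the weights are rescaled to integers using the finiteness of orbit types, and the pairs are glued with these multiplicities; the element $g=f\in\mathscr{S}_R(H)$ is then read off from the resulting covering diagram, its membership in $\mathscr{S}_R(H)$ being immediate because every $R$-ball of $\hat X$ is $H$-admissible. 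In short, the Haar measure is used to produce a consistent system of integer multiplicities for an explicit Leighton-style gluing, not to run an averaging or equidistribution argument; without some such construction your outline does not close.
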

\bigskip

Before proving the theorem we make some remarks about the $R$-symmetry-restricted closure $\mathscr{S}_R(H)$.

\begin{remk}
	It is easy to check that $\mathscr{S}_R(H) = \bigcap_{v \in V X} H G_{(B_R(v))}$, hence it is a closed subgroup of $G$. 
\end{remk}
\begin{remk}
Theorem \ref{thm:A} is a strengthening of the Bass--Kulkarni Uniform Commensurability Theorem \cite[4.8 (c)]{BassKulkarni}, since $G_H$, defined to be the largest subgroup of $G$ preserving all $H$-orbits, contains $\mathscr{S}_1(H)$.
\end{remk}

\begin{remk}\label{notHbar}
	Although $\cap_{R \in \N} \mathscr{S}_R(H) = \overline{H}$, the closure of $H$, we cannot necessarily find such a conjugating element $g$ as in Theorem~\ref{thm:A} with $g \in \overline{H}$.
	An example pointed out to us by Alexander Lubotzky is $H = \operatorname{SL}_2(\mathbb{Q}_p)$ (or in general a simple rank 1 Lie group over a local non-archimedean field) acting on its Bruhat--Tits tree, in which case $H$ is closed and there are uncountably many $H$-conjugacy classes of uniform lattices in $H$ \cite{Lubotzky}.
	The commensurability class of a uniform tree lattice, however, is countable: in general, every subgroup $\Gamma_2$ commensurable with a given subgroup $\Gamma_1 \leq G$ is contained in the commensurator subgroup $\operatorname{Comm}_G(\Gamma_1)$ (since for every $g \in \Gamma_2$ we have $\Gamma_1^g$ commensurable with $\Gamma_2^g = \Gamma_2$ and thus with $\Gamma_1$), and for the case of $\Gamma_1$ a uniform tree lattice the commensurator is countable by \cite[Corollary (8.6), p. 885]{BassKulkarni} and thus has only countably many finitely generated subgroups.
	Thus, there are uniform lattices not commensurable up to conjugacy in $H$.
	In such examples the finite index in the commensuration achievable by $g \in \mathscr{S}_R(H)$ tends to $\infty$ as $R \to \infty$.
	This is necessarily the case, since the set of $g \in G$ such that $[\Gamma_2 : \Gamma_1^g \cap \Gamma_2] \leqslant N$ is the union of \emph{finitely many} cosets of $\Gamma_1$ (since there are finitely many subgroups of finite index, and only finitely many isomorphisms between finite quotient graphs).
	If the index did not tend to infinity, the intersection of the nested sets of conjugating $g \in \mathscr{S}_R(H)$ would be non-empty, a contradiction.
\end{remk}

The following proof is essentially an adaptation of the ideas in~\cite{Woodhouse}.

\subsection{Proof of Theorem~\ref{thm:A}}

Let $\Gamma_1, \Gamma_2 \leq H \leq G$, be free uniform lattices, as above, with $X_i = T / \Gamma_i$.
%After fixing some positive integer $R$ we can assume, after passing to finite index subgroups of $\Gamma_i$, that $X_i$ have girth  $ > 2R$.
Given a tree $K$ and a simplicial map $f: K \rightarrow X_i$ we will let $\tilde{f}: K \rightarrow T$ denote some choice of lift of $f$.\par

An \emph{$R$-polyhedron} in $T$ is the closed $R$-neighborhood of a vertex $v$, written $B_R(v)$.
We say that $v$ is the \emph{center} of the polyhedron.
%As $X_i$ has girth greater than $R$, the $R$ polyhedrons are trees embedded in $X_i$.
%
A \emph{polyhedral pair} $\mathbf{P} = (P, \phi_1, \phi_2)$ over $X_1$ and $X_2$ is a graph $P$ with simplicial maps $\phi_i : P \rightarrow X_i$ such that the lift $\tilde{\phi}_i$ embeds $P$ in $T$ as an $R$-polyhedron.
The vertex in $P$ that maps to the center of $\tilde{\phi}_i(P)$ is the \emph{center} of $\mathbf{P}$.
An \emph{$H$-admissible polyhedral pair} $\mathbf{P} = (P, \phi_1, \phi_2)$  is an $R$-polyhedral pair such that there exists $h \in H$ such that $h \circ \tilde{\phi}_1 = \tilde{\phi}_2$. %,where $\tilde{\phi}_i: P \rightarrow T$ is a lift of $\phi_i$.
Note that this does not depend on the choice of lifts $\tilde{\phi}_1, \tilde{\phi}_2$ since they differ by deck transformations.

An \emph{$R$-face} in $T$ is the closed $(R-1)$-neighborhood of an edge $e$ in $T$.
We say that $e$ is the \emph{center} of the face.
An \emph{$R$-face pair} $\mathbf{F} = (F, \varphi_1, \varphi_2)$ over $X_1, X_2$ is a simplicial graph $F$ with simplicial maps $\varphi_i: F \rightarrow X_i$  such that the lift $\tilde{\varphi}_i$ embeds $F$ in $T$ as an $R$-face.
The edge in $F$ that maps to the center of $\tilde{\varphi}_i(F)$ is the \emph{center} of $\mathbf{F}$.
An \emph{$H$-admissible face pair} $\mathbf{F} = (F, \varphi_1, \varphi_2)$  is an $R$-face pair such that there exists $h \in H$ such that $h \circ \tilde{\varphi}_1 = \tilde{\varphi}_2$. %,where $\tilde{\varphi}_i: F \rightarrow T$ is a lift of $\varphi_i$.
Note that this does not depend on the choice of lifts $\tilde{\varphi}_1, \tilde{\varphi}_2$ as different lifts only differ by deck transformation.

The faces of a polyhedral pair $\mathbf{P} = (P, \phi_1, \phi_2)$ are the face pairs you obtain by restricting to the $(R-1)$-neighborhood of an edge incident to the center of $\mathbf{P}$.
Recall that the edges of $T$ are directed and these orientations are $G$-equivariant, and hence the edges of $X_i$  are also directed.
If $\mathbf{F}$ is an $H$-admissible $R$-face, then the edge $e$ of $F$ has a direction determined by the map $\phi_1$ that is consistent with the orientation determined by the map $\phi_2$.
We say that $\mathbf{P}$ is either on the left or on the right of its face $\mathbf{F}$ depending on this orientation.

If $\mathbf{P}$ and $\mathbf{P}' = (P', \phi_1', \phi_2')$ are polyhedral pairs that are respectively on the left and on the right of the face $\mathbf{F}$, then we may glue them together by identifying $P$ and $P'$ along the subspace $F$.
More precisely, there exist subspaces $F \subset P$ and $F' \subset P'$, that give the restrictions of $\mathbf{P}$ and $\mathbf{P}'$ to $\mathbf{F}$, and an isomorphism $\theta: F \rightarrow F'$ such that $ \phi_i=\phi'_i \circ \theta$.
Then defining $P \cup P'$ to be the quotient space $P \sqcup P' / \sim$ where $p \sim p'$ if and only if $\theta(p) = p'$.
The resulting space $P \cup P'$ has well defined maps $P \cup P' \rightarrow X_i$ since $\phi_i(p) = \phi_i'(\theta(p))$ for all $p \in F$.
Since $\mathbf{P}$ is on the left of $\mathbf{F}$ and $\mathbf{P}'$ is on the right,
the maps $\phi_1, \phi_1'$ lift to embeddings $\tilde{\phi}_1, \tilde{\phi}_1'$ of $P$ and $P'$ as $R$-polyhedrons in $T$ centered at vertices $\tilde{v}$ and $\tilde{v}'$ adjacent along an edge $\tilde{e}$. 
Thus, the union $ P\cup P'$ immerses in $X_i$, since it lifts to an embedding in $T$ as the $R$-neighborhood of an edge $\tilde{e}$.
We say that we have \emph{glued $\mathbf{P}$ to $\mathbf{P}'$ along $\mathbf{F}$}.

Let $\mathcal{P}$ denote the set of all $H$-admissible $R$-polyhedral pairs over $X_1$ and $X_2$.
If $\mathbf{F}$ is an $H$-admissible $R$-face pair then let $\overleftarrow{\mathbf{F}}$ denote the set of all $\mathbf{P} \in \mathcal{P}$ on the left of $\mathbf{F}$, and define $\overrightarrow{\mathbf{F}}$ similarly.

\begin{lem} \label{lem:cosetCorrespondance}
	Let $\mathbf{F} = (F, \varphi_1, \varphi_2)$ be an $H$-admissible $R$-face pair and suppose that $\mathbf{P} = (P, \phi_1, \phi_2) \in \overleftarrow{\mathbf{F}}$.
	Then $$\big|\overleftarrow{\mathbf{F}} \big| = [ H_{(\widetilde{F})} : H_{(\widetilde{P})} ],$$
	where $\widetilde{F} = \tilde{\varphi}_1(F )$ and $\widetilde{P} = \tilde{\phi}_1(P)$ are chosen so that $\widetilde{F} \subset \widetilde{P}$.
\end{lem}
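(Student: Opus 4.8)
The plan is to fix compatible lifts once and for all, then parametrise $\overleftarrow{\mathbf{F}}$ by the ``$X_2$-side'' data of its members, so that the lemma reduces to a coset count in which the freeness of $\Gamma_2$ kills all stabiliser terms.

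First I would set up notation. Write $q_i \colon T \to X_i$ for the quotient maps. The hypothesis gives us lifts with $\widetilde F = \tilde\varphi_1(F) \subset \tilde\phi_1(P) = \widetilde P$; let $\tilde e \subset \widetilde F$ be the center edge and $\tilde v$ the center of $\widetilde P$, which is the endpoint of $\tilde e$ prescribed by the ``left'' convention. Identifying $P$ with $\widetilde P$ via $\tilde\phi_1$, the $X_1$-map of $\mathbf{P}$ is just $q_1|_{\widetilde P}$. Using $H$-admissibility of $\mathbf{P}$, pick $h_0 \in H$ with $h_0 \circ \tilde\phi_1 = \tilde\phi_2$, so the $X_2$-map of $\mathbf{P}$ is $q_2 \circ h_0|_{\widetilde P}$; restricting to $\widetilde F$ gives $\varphi_2 = q_2 \circ h_0|_{\widetilde F}$.

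Next comes the key structural observation: up to isomorphism, every $\mathbf{P}' \in \overleftarrow{\mathbf{F}}$ has the form $(\widetilde P,\, q_1|_{\widetilde P},\, \psi)$ for a \emph{unique} simplicial map $\psi \colon \widetilde P \to X_2$ satisfying $\psi|_{\widetilde F} = \varphi_2$, and moreover this $\psi$ equals $q_2 \circ h|_{\widetilde P}$ for some $h \in h_0 H_{(\widetilde F)}$. Indeed, since lifts of $\varphi_1$ to $T$ differ by elements of $\Gamma_1$ one can choose the lift of the $X_1$-map of $\mathbf{P}'$ so that the face $\cong \mathbf{F}$ maps onto $\widetilde F$ and (because $\mathbf{P}'$ lies on the left) the center of $\mathbf{P}'$ maps to $\tilde v$; the lift then becomes a graph isomorphism $P' \xrightarrow{\ \sim\ } \widetilde P$ carrying the $X_1$-map to $q_1|_{\widetilde P}$ and matching centers, which puts $\mathbf{P}'$ in the stated form. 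Uniqueness of $\psi$ holds because an automorphism of $\widetilde P$ commuting with $q_1|_{\widetilde P}$ and fixing the center is the identity (lifts from a connected space agreeing at one point coincide). Writing $\psi = q_2 \circ h|_{\widetilde P}$ via $H$-admissibility, comparing with $\varphi_2 = q_2 \circ h_0|_{\widetilde F}$ on $\widetilde F$ and correcting $h$ on the left by the relevant element of $\Gamma_2$ (which changes neither $\psi$ nor membership in $H$) arranges $h \in h_0 H_{(\widetilde F)}$. Hence $k \mapsto \big(\widetilde P,\, q_1|_{\widetilde P},\, q_2 \circ h_0 k|_{\widetilde P}\big)$ defines a surjection $H_{(\widetilde F)} \twoheadrightarrow \overleftarrow{\mathbf{F}}$.

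Finally I would compute its fibres. If $k, k' \in H_{(\widetilde F)}$ give isomorphic pairs, uniqueness of $\psi$ forces $q_2 \circ h_0 k|_{\widetilde P} = q_2 \circ h_0 k'|_{\widetilde P}$, so $h_0 k|_{\widetilde P}$ and $h_0 k'|_{\widetilde P}$ differ by some $\gamma \in \Gamma_2$; restricting to $\widetilde F$, on which $k$ and $k'$ act trivially, shows $\gamma$ fixes the nonempty set $h_0(\widetilde F)$ pointwise, whence $\gamma = 1$ as $\Gamma_2$ acts freely. Therefore $h_0 k|_{\widetilde P} = h_0 k'|_{\widetilde P}$, i.e. $k^{-1}k' \in H_{(\widetilde P)}$, and conversely. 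Since $H_{(\widetilde P)} \leq H_{(\widetilde F)}$ (because $\widetilde F \subset \widetilde P$), the fibres are precisely the left cosets of $H_{(\widetilde P)}$ in $H_{(\widetilde F)}$, giving $|\overleftarrow{\mathbf{F}}| = [H_{(\widetilde F)} : H_{(\widetilde P)}]$. I expect the main obstacle to be the structural observation of the third paragraph: carefully choosing the lift to normalise an arbitrary member of $\overleftarrow{\mathbf{F}}$ to standard form and checking that $\psi$ is well defined and determines the pair; once that is in place, freeness of $\Gamma_2$ makes the fibre computation routine.
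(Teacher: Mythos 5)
Your proposal is correct and follows essentially the same route as the paper's proof: both parametrise $\overleftarrow{\mathbf{F}}$ by elements of $H$ carrying $\widetilde{P}_1$ to $\widetilde{P}_2$ compatibly with $\mathbf{F}$, and identify the fibres with cosets of $H_{(\widetilde{P})}$ in $H_{(\widetilde{F})}$. You simply make explicit two points the paper leaves implicit — normalising the lifts of an arbitrary $\mathbf{P}'$ onto the fixed polyhedron $\widetilde{P}$, and using freeness of $\Gamma_1,\Gamma_2$ to get uniqueness of the representing map and of the coset — which is a welcome tightening rather than a different argument.
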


\begin{proof}
	Let $\widetilde{F}_i \subseteq T$ be the $R$-face $\tilde{\varphi}_i(F)$ and let $e_i$ be the center of $\widetilde{F}_i$ and $v_i$ be the vertex on the left of $e_i$.
	Let $\widetilde{P}_i \subseteq T$ be the $R$-polyhedron $\tilde{\phi}_i(P)$, where we have chosen the lift so that $\widetilde{F}_i \subset \widetilde{P}_i$.
	Observe that since $\mathbf{P}$ is on the left of $\mathbf{F}$, the center of $\widetilde{P}_i$ is $v_i$.
	By $H$-admissibility, there exists $h \in H$ such that $h \circ \tilde{\phi}_1 = \tilde{\phi}_2$, so $h \cdot \widetilde{P}_1 = \widetilde{P}_2$.
	
	Conversely, any $h' \in H$ such that $h' \cdot \widetilde{P}_1 = \widetilde{P}_2$ defines an $H$-admissible $R$-polyhedral pair $\mathbf{P}' = (P' , \phi_1', \phi_2')$ by giving an identification of $\widetilde{P}_1$ to $\widetilde{P}_2$.
	Indeed, we can see that any $\mathbf{P}' \in \overleftarrow{\mathbf{F}}$ can be obtained from some such identification by an element $h' \in H$.

	Note that $\mathbf{P}' \in \overleftarrow{\mathbf{F}}$ if and only if $h^{-1}h' \in H_{(\widetilde{F}_1)}$.
	Moreover, $\mathbf{P}' = \mathbf{P}$ if and only if $h^{-1}h' \in H_{(\widetilde{P}_1)}$.
	Thus we deduce that the elements in $\overleftarrow{\mathbf{F}}$ correspond to the elements in $H_{(\widetilde{F})} / H_{(\widetilde{P})}$.
\end{proof}
\bigskip

We wish to find a non-trivial weight function $\omega: \mathcal{P} \rightarrow \mathbb{N}$ such that for all $H$-admissible $R$-faces $\mathbf{F}$, we have

\[
\sum_{\mathbf{P} \in \overleftarrow{\mathbf{F}}} \omega(\mathbf{P}) = \sum_{\mathbf{P} \in \overrightarrow{\mathbf{F}}} \omega(\mathbf{P})
\]

We say that such $\omega$ satisfy the \emph{basic gluing equations}.
\bigskip

If $\omega$ satisfies the basic gluing equations, then by taking $\omega(\mathbf{P})$ copies of each $\mathbf{P}$ we can glue them together to obtain a graph $\hat{X}$ that covers both $X_1$ and $X_2$, whose restriction to $R$-polyhedrons give $H$-admissible $R$-polyhedral pairs.

Let $\mu$ be the bi-invariant Haar measure for $H$.
Let $\omega(\mathbf{P}) = \mu(H_{(\widetilde{P})})$ where $\widetilde{P} = \tilde{\phi}_i(P)$, so the stabilizer is well defined up to conjugation in $H$, and hence the Haar measure is well defined (and doesn't depend on the choice of lift or $i \in \{ 1,2\}$).
If $\widetilde{P}$ and $\widetilde{P}'$ are $R$-polyhedrons in $T$, and let $K$ denote their union, then $H_{(K)}$ is a finite index subgroup of both $H_{(\wt{P})}$ and $H_{(\wt{P}')}$.
Thus the Haar measures of the pointwise stabilizers are commensurable: 
\[
\frac{\mu(H_{(\wt{P})})}{\mu(H_{(\wt{P}')})} = \frac{[H_{(\wt{P})} : H_{(K)}] }{ [H_{(\wt{P}')} : H_{(K)}]} 
\]
Since there are only finitely many orbits of $R$-polyhedron in $T$ it follows that we can scale $\omega$ so that it takes integer values.

\begin{lem} \label{lem:basic}
	The weight function $\omega$ satisfies the basic gluing equations.
\end{lem}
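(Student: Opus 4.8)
The plan is to evaluate both sides of the basic gluing equation for a fixed $H$-admissible $R$-face pair $\mathbf{F} = (F,\varphi_1,\varphi_2)$ and to show that each side equals $\mu\big(H_{(\widetilde{F})}\big)$, where $\widetilde{F} = \tilde{\varphi}_1(F)$. First I would fix the lift $\tilde{\varphi}_1$, so that $\widetilde{F}$ is a definite $R$-face in $T$ with center edge $\tilde{e}$; let $\tilde{v}$ and $\tilde{v}'$ be the vertices on the left and on the right of $\tilde{e}$. For any $\mathbf{P} = (P,\phi_1,\phi_2) \in \overleftarrow{\mathbf{F}}$ we may choose the lift $\tilde{\phi}_1$ so that $\tilde{\phi}_1(P) \supseteq \widetilde{F}$; since $\mathbf{P}$ lies on the left of $\mathbf{F}$, this forces $\tilde{\phi}_1(P) = B_R(\tilde{v})$, which does not depend on $\mathbf{P}$. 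Hence $\omega(\mathbf{P}) = \mu\big(H_{(B_R(\tilde{v}))}\big)$ is constant on $\overleftarrow{\mathbf{F}}$, and likewise $\omega \equiv \mu\big(H_{(B_R(\tilde{v}'))}\big)$ on $\overrightarrow{\mathbf{F}}$.

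Next I would invoke Lemma~\ref{lem:cosetCorrespondance}, which gives $\big|\overleftarrow{\mathbf{F}}\big| = [\,H_{(\widetilde{F})} : H_{(B_R(\tilde{v}))}\,]$ (in particular $\overleftarrow{\mathbf{F}}$ is finite, so the sum is meaningful), whence
\[
\sum_{\mathbf{P} \in \overleftarrow{\mathbf{F}}} \omega(\mathbf{P}) = [\,H_{(\widetilde{F})} : H_{(B_R(\tilde{v}))}\,]\cdot \mu\big(H_{(B_R(\tilde{v}))}\big).
\]
The remaining ingredient is the elementary fact that if $U' \leqslant U$ are compact open subgroups of $H$ then $\mu(U) = [\,U:U'\,]\cdot\mu(U')$, since $U$ is a disjoint union of $[\,U:U'\,]$ left cosets of $U'$, each of measure $\mu(U')$ by left-invariance of $\mu$. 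Because $H$ acts on the locally finite tree $T$, the pointwise stabilizer in $H$ of any finite set of vertices is a compact open subgroup, and the inclusion $\widetilde{F} \subseteq B_R(\tilde{v})$ gives $H_{(B_R(\tilde{v}))} \leqslant H_{(\widetilde{F})}$. Applying the displayed identity with $U = H_{(\widetilde{F})}$ and $U' = H_{(B_R(\tilde{v}))}$ collapses the sum above to $\mu\big(H_{(\widetilde{F})}\big)$.

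Finally I would run the symmetric argument on the other side: the proof of Lemma~\ref{lem:cosetCorrespondance} applies verbatim with ``left'' replaced by ``right'', since it only ever uses that $\mathbf{P}$ lies on one fixed side of $\mathbf{F}$, and it yields $\big|\overrightarrow{\mathbf{F}}\big| = [\,H_{(\widetilde{F})} : H_{(B_R(\tilde{v}'))}\,]$. The same Haar-measure computation (now with $U' = H_{(B_R(\tilde{v}'))} \leqslant H_{(\widetilde{F})} = U$) then gives $\sum_{\mathbf{P} \in \overrightarrow{\mathbf{F}}} \omega(\mathbf{P}) = \mu\big(H_{(\widetilde{F})}\big)$. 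Comparing the two totals proves the basic gluing equations. I expect the only delicate points to be bookkeeping ones — checking that all compatible lifts of polyhedra on a fixed side of $\mathbf{F}$ genuinely coincide (so that $\omega$ really is constant on each side of $\mathbf{F}$) and matching $\overleftarrow{\mathbf{F}}$, $\overrightarrow{\mathbf{F}}$ to the correct pointwise stabilizers via Lemma~\ref{lem:cosetCorrespondance}; the measure-theoretic step itself is routine.
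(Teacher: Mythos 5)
Your proposal is correct and follows essentially the same route as the paper's proof: the weights are constant on each side of $\mathbf{F}$ because compatible lifts all land on the same $R$-polyhedron, Lemma~\ref{lem:cosetCorrespondance} counts the polyhedral pairs on that side as an index $[H_{(\widetilde{F})} : H_{(\widetilde{P})}]$, and the Haar-measure identity $\mu(U) = [U:U']\,\mu(U')$ collapses each sum to $\mu(H_{(\widetilde{F})})$. The only difference is that you spell out the symmetric right-hand computation and the compact-open-subgroup justification that the paper leaves implicit.
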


\begin{proof}
	We first observe that since all $\mathbf{P} \in \overleftarrow{\mathbf{F}}$ can have lifts $\tilde{\phi}_1$ chosen so that they map $P$ to the same $R$-polyhedron $\widetilde{P}$ in $T$, their weights are all equal. 
	This implies
	\begin{align*}
	\sum_{\overleftarrow{\mathbf{F}}} \omega(\mathbf{P}) & = \mu(H_{(\widetilde{P})}) |\overleftarrow{\mathbf{F}} | \\
	& = \mu(H_{(\widetilde{P})}) [H_{(\widetilde{F})} : H_{(\widetilde{P})}] \\
	& = \mu(H_{(\widetilde{F})}). \\
	\end{align*}
	where the second equality follows from Lemma~\ref{lem:cosetCorrespondance}.
	So the left and right side of the equations are equal.
\end{proof}
\bigskip

\begin{proof}[Proof of Thm~\ref{thm:A}]
	Let $\omega$ be the integer valued weight function satisfying the basic gluing equations (Lemma~\ref{lem:basic}). 
	Let $\mathcal{P}_\omega$ be the set obtained by taking $\omega(\mathbf{P})$ copies of each $\mathbf{P} \in \mathcal{P}$, for each $H$-admissible $R$-face pair $\mathbf{F}$ we can choose a one to one correspondence between the polyhderal pairs in $\mathcal{P}$ on the left of $\mathbf{F}$ and those on the right of $\mathbf{F}$ and glue them together.
	Corresponding faces can be glued together.
	By gluing all corresponding faces over all face pairs we obtain a common cover $\hat{X}$ of $X_1$ and $X_2$.
	
	To check that $\hat{X}$ is indeed a covering space of both $X_1$ and $X_2$, observe that if a vertex $v$ in $\hat{X}$ is the center of some $\mathbf{P} \in \mathcal{P}_\omega$ then $\hat{X}$ is locally a common cover at that point.
	Moreover, any vertex $u$ that is adjacent to $v$ is also the center of some other $\mathbf{P}' \in \mathcal{P}_\omega$ since each $R$-face of $\mathbf{P}$ has an $R$-polyhedral pair glued to it.
	Thus we can inductively conclude that $\hat{X}$ is locally a common cover at all vertices in the same connected component as $v$, and hence that component is a common cover of $X_1$ and $X_2$.
	Since every component of $\hat{X}$ is contructed from elements of $\mathcal{P}_\omega$, all components are common covers.
	
	Let $f:T \rightarrow T$ be an automorphism (unique up to pre and post composition by deck transformations) such that the following diagram commutes:
	\begin{equation*}
	\begin{tikzcd}[
	ar symbol/.style = {draw=none,"#1" description,sloped},
	isomorphic/.style = {ar symbol={\cong}},
	equals/.style = {ar symbol={=}},
	subset/.style = {ar symbol={\subset}}
	]
	T\ar{dd}\ar{rr}{f}\ar{dr}&&T\ar{dl}\ar{dd}\\
	&\hat{X}\ar{dl}\ar{dr}\\
	X_1&&X_2
	\end{tikzcd}
	\end{equation*}	
	Then $f \in \mathscr{S}_R(H)$ since $\hat{X}$ is constructed from $H$-admissible $R$-polyhedral pairs. Indeed, each $R$-polyhedron $\wt{P}$ in $T$ determines a polyhedral pair $\mathbf{P} = (P, \phi_1, \phi_2)$ given by its image in $\hat{X}$.
	Then $\phi_1$ and $\phi_2$ have lifts such that $f \circ \tilde{\phi}_1 = \tilde{\phi}_2$ and with $h \in H$ such that $h \circ \tilde{\phi}_1 = \tilde{\phi}_2$, so $h$ is equal to $f$ on $\wt{P}$.
\end{proof}

\end{appendices}

\vspace{2cm}

\bigskip
\bigskip

\emph{Email addresses:}\\
samuel.shepherd@vanderbilt.edu\\
ggardam@uni-muenster.de\\
daniel.woodhouse@maths.ox.ac.uk


\begin{thebibliography}{}
	\vspace{1cm}
\bibitem{Angluin}
Dana Angluin and A. Gardiner, \textit{Finite common coverings of pairs of regular graphs}. J. Combin. Theory
Ser. B 30 (1981), no. 2, 184–187.
\bibitem{Banks}
Christopher Banks, Murray Elder and George A. Willis, \textit{Simple groups of automorphisms of trees determined by their actions on finite subtrees}, J. Group Theory 18 (2015), 235–261.
\bibitem{Bass}
Hyman Bass, \textit{Covering theory for graphs of groups}, J. Pure Appl. Algebra 89 (1993), no. 1--2, 3--47.
\bibitem{BassKulkarni}
Hyman Bass and Ravi Kulkarni, \textit{Uniform tree lattices}. J. Amer. Math. Soc. 3 (1990), no. 4, 843–902.
\bibitem{3mlfd}
Jason A. Behrstock and Walter D. Neumann, \textit{Quasi-isometric classification of non-geometric 3-manifold
groups}. J. Reine Angew. Math. 669 (2012), 101–120.
\bibitem{nonpos}
Martin R. Bridson and Andr\'e Haefliger, \textit{Metric Spaces of Non-positive Curvature}, Springer (1999).
\bibitem{Diestel}
Reinhard Diestel, \textit{Graph Theory}, 5th edition, Grad. Texts in Math. 173, Springer, Berlin (2017).
\bibitem{Leighton}
Frank T. Leighton, \textit{Finite common coverings of graphs}. J. Combin. Theory Ser. B 33 (1982), no. 3, 231–238.
\bibitem{Levitt}
Gilbert Levitt, \textit{Generalized Baumslag--Solitar groups: rank and finite index subgroups}. Ann. Inst. Fourier
(Grenoble) 65 (2015), no. 2, 725–762.
\bibitem{Lubotzky}
	Alexander Lubotzky, \textit{Lattices in rank one Lie groups over local fields}, Geom. Funct. Anal. 1 (1991), no. 4, 406--431.
\bibitem{Landau}
J. P. Massias, \textit{Majoration explicite de l'ordre maximum d'un élément du groupe symétrique}, Ann. Fac Sei. Toulouse Math. v. 6 (1984), 269--280.
\bibitem{Neumann}
Walter D. Neumann, \textit{On Leighton's graph covering theorem}. Groups Geom. Dyn. 4 (2010), no. 4, 863–872.
\bibitem{scottwall}
Peter Scott and Terry Wall, \textit{Topological methods in group theory}, Lond. Math. Soc. Lecture Notes Ser. 36 (1979), 137--204.
\bibitem{ShepherdWoodhouse}
Sam Shepherd and Daniel J. Woodhouse, \textit{Quasi-isometric rigidity for graphs of virtually free groups with two-ended edge groups}, J. Reine Angew. Math. 782 (2022), 121–173.
\bibitem{StarkWoodhouse} Emily Stark and Daniel J. Woodhouse, \emph{Action rigidity for free products of hyperbolic manifold groups}, arXiv:1910.09609, 2019.
\bibitem{Wise}
Daniel T. Wise, \textit{Non-positively curved squared complexes, aperiodic tilings, and non-residually finite groups}, PhD thesis, Princeton University (1996)
\bibitem{Woodhouse}
Daniel J. Woodhouse, \textit{Revisiting Leighton's Theorem with the Haar measure}, Math. Proc. Cambridge Philos. Soc. 170 (2021), no. 3, 615–623.
\bibitem{quotient}
www.stacks.math.columbia.edu/tag/04S3
\end{thebibliography}
\end{document}